\documentclass{amsart}
\usepackage{graphics}
\begin{document}

\newcommand{\Q}{\mathbb{Q}}
\newcommand{\C}{\mathbb{C}}
\newcommand{\D}{\mathbb{D}}
\newcommand{\Z}{\mathbb{Z}}
\newcommand{\R}{\mathbb{R}}
\newcommand{\N}{\mathbb{N}}
\newcommand{\XTC}{\hat{\mathbb{C}}}
\newcommand{\hatnu}{\nu_{\XTC}}
\newcommand{\hatsigma}{\sigma_{\XTC}}
\newcommand{\deltaXTC}{\delta_{\XTC}}
\newcommand{\deltaopen}{\delta_{\mathcal{O}(\XTC)}}
\newcommand{\deltaopenen}{\delta_{\mathcal{O}(\XTC)}^{en}}
\newcommand{\deltaclosed}{\delta_{\mathcal{C}(\XTC)}}
\newcommand{\deltamero}{\delta_{\mathfrak{M}(\XTC)}}
\newcommand{\cat}{^\frown}

\newcommand{\dom}{\operatorname{dom}}
\newcommand{\ran}{\operatorname{ran}}
\newcommand{\diam}{\operatorname{diam}}
\newcommand{\normal}{n}
\newcommand{\Log}{\operatorname{Log}}
\newcommand{\Arg}{\operatorname{Arg}}
\renewcommand{\Re}{\operatorname{Re}}
\renewcommand{\Im}{\operatorname{Im}}
\newcommand{\Int}{\operatorname{Int}}
\newcommand{\Ext}{\operatorname{Ext}}
\newcommand{\ray}[1]{\overrightarrow{#1}}

\newcommand{\const}{\mbox{\emph{Const.}} }

\newtheorem{theorem}{Theorem}[section]
\newtheorem{lemma}[theorem]{Lemma}
\newtheorem{assumption}[theorem]{Assumption}

\theoremstyle{definition}
\newtheorem{definition}[theorem]{Definition}
\newtheorem{question}[theorem]{Question}

\newtheorem{convention}[theorem]{Convention}

\newtheorem{notation}[theorem]{Notation}

\theoremstyle{theorem}
\newtheorem{corollary}[theorem]{Corollary}

\theoremstyle{theorem}
\newtheorem{proposition}[theorem]{Proposition}

\theoremstyle{theorem}
\newtheorem{givendata}[theorem]{Given Data}

\theoremstyle{theorem}
\newtheorem{conjecture}[theorem]{Conjecture}

\theoremstyle{definition}
\newtheorem{example}[theorem]{Example}

\theoremstyle{theorem}
\newtheorem{property}[theorem]{Property}

\numberwithin{equation}{section}

\title{The power of backtracking and the confinement of length}
\author{Timothy H. McNicholl}
\address{Department of Mathematics\\
Lamar University\\
Beaumont, Texas 77710}
\email{timothy.h.mcnicholl@gmail.com}

\subjclass[2000]{Primary 03F60, 03D32, 54D05 }

\begin{abstract}
We show that there is a point on a computable arc that does not belong to any computable rectifiable curve.  We also show that there is a point on a computable rectifiable curve with computable length that does not belong to any computable arc.
\end{abstract}
\maketitle

\section{Introduction}

The famous \emph{traveling salesman problem} is, given a finite list of cities and the distances between them, to determine if there is a route that visits each city exactly once and whose total distance is below some given bound.   A related problem is the \emph{Hamiltonian circuit problem}: given a finite graph, determine if it has a Hamiltonian circuit.  Both problems iare NP-complete \cite{Garey.Johnson.1979}.   

In 1990, P. Jones proposed and solved the \emph{analyst's traveling salesman problem} \cite{Jones.1990}.  Namely, which subsets of the plane are contained in a rectifiable curve?  In 1992, K. Okikiolu solved the $n$-dimensional version of this problem \cite{Okikiolu.1992}.  Such curves have finite length, but may backtrack.

Recently, X. Gu, J. Lutz, and E. Mayordomo considered and solved a computable version of this problem \cite{Gu.Lutz.Mayordomo.2006}.  Namely, which \emph{points} in $n$-dimensional Euclidean space belong to \emph{computable} rectifiable curves?  The results uncovered deep connections with topics in algorithmic randomness and effective Hausdorff dimension \cite{Downey.Hirschfeldt.2010}, \cite{Lutz.2005}.

 Here, we consider yet another effective version of the analyst's traveling salesman problem: which points belong to computable \emph{arcs}.  By an arc, we mean a homeomorphic image of $[0,1]$.  By a computable arc we mean one for which there is a computable homeomorphism.  Thus, an arc can not backtrack, but may have infinite length.  This problem is thus a computable and continuous variation on the Hamiltonian circuit problem.  It is well-known that there are computable arcs with infinite length.  In fact, many fractal curves have this property \cite{Falconer.2003}.
 
 Interestingly, the continuous version of the Hamiltonian circuit problem was solved in 1917 by R.L. Moore and J.R. Kline \cite{Moore.Kline.1919}.  Whereas the work of Jones and Okikiolu seems to translate more or less directly into the framework of computability, the methods of Kline and Moore so not appear to do so.
 
As suggested in \cite{Gu.Lutz.Mayordomo.2006}, these results can be interpreted in terms of the possible planar motion of a nanobot.  The effective version of the traveling salesman problem asks which points could be reached if the nanobot is allowed to backtrack but must follow a path of finite length.  In this case, since backtracking is allowed, the finite length requirement seems to be a reasonable way of excluding space-filling curves \cite{Hocking.Young.1988}.  The effective version of the Hamiltonian circuit problem asks which points could be reached if backtracking is prohibited but paths of infinite length are allowed.   
 
 Here, we compare these two classes of points.  Our conclusion is that neither is contained in the other.  Accordingly, our first main result is that there is a point on a computable arc that does not belong to any computable rectifiable curve.  
 Thus, if backtracking is prohibited but the traverse of an infinite length is allowed, there are points which could be reached by the nanobot that it could not reach even with backtracking once its path is required to have finite length.  
 We give two proofs which reveal connections with algorithmic randomness as well as effective Hausdorff dimension. 
 
  Our second main result is that there is a $\Delta^0_2$ point that belongs to a computable rectifiable curve \emph{with computable length} but that does not belong to any computable arc.  That is, if backtracking is allowed, then there are points that can be reached by a nanobot that could not be reached when backtracking is prohibited even if it must traverse a path whose length is a computable real.  The proof is a finite-injury priority argument \cite{Soare.1987} which also uses recent results on effective local connectivity and the computation of space-filling curves \cite{Daniel.McNicholl.2009}, \cite{Couch.Daniel.McNicholl.2010}.  We note that
Ko has shown there is a computable rectifiable curve whose length is not computable \cite{Ko.1995}.  Thus, the point we construct here belongs to a smaller class than that considered by Lutz \emph{et. al.}.
  
\section{Background and preliminaries from computability and computable analysis}

Our approach to computable analysis is more or less that which appears in \cite{Weihrauch.2000}.  We assume familiarity with the fundamental concepts of computability \cite{Boolos.Burgess.2002}, \cite{Cooper.2004}.

A \emph{rational interval} is an open interval whose endpoints are rational.
A \emph{rational rectangle} is a Cartesian product of two rational intervals.

A \emph{name} of a real number $x$ is a list of all rational intervals to which $x$ belongs.  A real is \emph{computable} if it has a computable name.

A \emph{name} of a point $p \in \R^2$ is a list of all rational rectangles to which $x$ belongs.

A  \emph{name} of an open set $U \subseteq \R^2$ is a list of all rational rectangles $R$ such that 
$\overline{R} \subseteq U$.

A set $X \subseteq \R^2$ is \emph{computably compact} if it is possible to uniformly compute from a number $k \in \N$ a finite covering of $X$ by rational rectangles each of which contains a point of $X$ and has diameter less than $2^{-k}$.  

A function $f : \subseteq \R \rightarrow \R^2$ is \emph{computable} if there is a Turing machine with a one-way output tape and with the property that whenever a name of an $x \in \R$ is written on its input tape and the machine is allowed to run indefinitely, a name of $f(x)$ is written on the output tape.  Equivalently, there is an \emph{$e$-reduction} (\cite{Cooper.2004}) that maps each name of any $x \in \dom(f)$ to a name of $f(x)$.

A subset of the plane is a \emph{curve} if it is the range of a continuous function whose domain is $[0,1]$.   
If at least one such function is computable, then the curve is also said to be computable.  
A subset of the plane is an \emph{arc} if it is a homeomorphic image of $[0,1]$.  
An arc is computable if at least one such homeomorphism is computable.  

Every arc is a curve.  Thus, there is a clash of terminology here since there are arcs which are computable \emph{as curves} but for which there is no computable homeomorphism with $[0,1]$ \cite{Gu.Lutz.Mayordomo.2009}.  However, context will always make clear which sense of `computable' is intended.  That is, whenever we are considering a curve, if the curve is also an arc, it will be called computable just in case it is the image of $[0,1]$ under a computable homeomorphism.  Otherwise, it will be called computable just in case it is the image of $[0,1]$ under a computable map.

A curve is \emph{computably rectifiable} if it has finite length and its length is a computable real.

Since $[0,1]$ is compact, a function $f : [0,1] \rightarrow \R^2$ is a homeomorphism if and only if it is continuous and injective.  See, \emph{e.g.} Theorem 2-103 of \cite{Hocking.Young.1988}.

A set $X \subseteq \R^2$ is \emph{effectively locally connected} if there is a Turing machine with a one-way output tape and two input tapes and with the property that whenever a name of a point $p \in X$ is written on the  first input tape and a name of an open set $U \subseteq \R^2$ that contains $p$ is written on the second input tape and the machine is allowed to run indefinitely, a name of an open set $V \subseteq \R^2$ for which $p \in V$ and $V \cap X$ is connected is written on the output tape.  See \cite{Miller.2004}, \cite{Brattka.2008}, \cite{Daniel.McNicholl.2009}, \cite{Couch.Daniel.McNicholl.2010}.  The following is a consequence of the main theorem of \cite{Couch.Daniel.McNicholl.2010} and is an effective version of the Hahn-Mazurkiewicz Theorem \cite{Hocking.Young.1988}.

\begin{theorem}\label{thm:PEANO}
If $X \subseteq \R^2$ is computably compact and effectively locally connected, then there is a computable surjection of $[0,1]$ onto $X$.
\end{theorem}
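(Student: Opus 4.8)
The statement is an effective form of the Hahn--Mazurkiewicz theorem, which characterizes the Hausdorff continuous images of $[0,1]$ as the compact, connected, locally connected metric spaces. If one grants the main theorem of \cite{Couch.Daniel.McNicholl.2010} the conclusion follows almost immediately; absent that, the plan is to effectivize the classical proof, and it is this effectivization that I sketch. (As in the classical theorem one really needs $X$ connected; this is harmless here, since every set to which we apply the theorem below is connected, and the hypothesis is in any case built into the cited result.) Three classical ingredients must be made effective: (a) the passage from computable compactness to a usable system of finite nets; (b) an \emph{effective uniform} local connectivity --- a computable modulus $m\colon\N\to\N$ such that any two points of $X$ at distance less than $2^{-m(k)}$ lie together in a connected subset of $X$ of diameter less than $2^{-k}$; and (c) the assembly of the desired surjection as a computable uniform limit of computable piecewise-constant approximations.

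For (a), running the computable-compactness algorithm at precision $2^{-k}$ gives, uniformly in $k$, a finite list of rational rectangles covering $X$, each meeting $X$ and of diameter less than $2^{-k}$; by intersecting consecutive levels one obtains an effective ``adjacency'' relation linking a level-$(k{+}1)$ rectangle to the level-$k$ rectangle it refines, and along a fixed traversal of these rectangles consecutive ones overlap. Picking a computable point of $X$ in each rectangle and spreading these choices over a subdivision of $[0,1]$ indexed, in order, by the rectangles at level $k$, yields a computable sequence $f_k\colon[0,1]\to\R^2$ of piecewise-constant maps whose ranges approximate $X$ and along which consecutive sample values lie within a fixed multiple of $2^{-k}$ of one another.

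Step (b) is the crux. One feeds effectively presented points of $X$, each paired with a small rational ball about it, into the effective-local-connectivity machine; using the finite $2^{-k}$-nets from (a), finitely many of the resulting connected neighborhoods already cover $X$, and this finite subcover can be extracted \emph{uniformly}. An effective Lebesgue-number estimate for a uniformly given finite open cover of a computably compact set then produces $m$. The two delicate points are that the cover really must be produced uniformly for its Lebesgue number to be computable, and that the connected pieces must be selected so that finite concatenations of them still shrink, which requires a telescoping estimate (a connected set of small diameter need not itself be a continuous image of $[0,1]$, so the eventual ``short path'' between two nearby points is obtained by recursively reapplying (b) at finer scales and summing a geometric series). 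This quantitative passage from pointwise to uniform local connectivity is precisely the heart of Hahn--Mazurkiewicz.

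Granting $m$, step (c) is bookkeeping. Using $m$ one refines the sequence $f_k$ from (a) so that at each stage consecutive sample values along the subdivision lie in a common connected subset of $X$ of diameter $O(2^{-k})$ and one interpolates through it; then the uniform distance between $f_k$ and $f_{k+1}$ is $O(2^{-k})$, the $f_k$ converge uniformly and computably to a computable map $f\colon[0,1]\to\R^2$, $f$ is continuous with a modulus of continuity read off from $m$, and $f$ is onto because $\ran(f)$ is closed, is contained in $X$, and contains the dense set $\bigcup_k\ran(f_k)$. I expect step (b) to be the main obstacle: turning a procedure that produces connected neighborhoods \emph{one point at a time} into a single algorithm delivering a modulus valid uniformly on $X$ means extracting an effective Lebesgue number from the compactness data while simultaneously keeping every connected piece small enough that its concatenations still converge --- every decision being forced to rest on finite approximations alone.
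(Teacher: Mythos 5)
The paper gives no proof of Theorem \ref{thm:PEANO}: it is imported as a consequence of the main theorem of \cite{Couch.Daniel.McNicholl.2010}, so your opening sentence already reproduces everything the paper itself does, and the rest of your proposal is a sketch of the argument the paper delegates to that reference. The sketch follows the standard effectivization of Hahn--Mazurkiewicz and its skeleton is right: finite $2^{-k}$-nets from computable compactness; a computable modulus of \emph{uniform} local connectivity obtained by dovetailing the effective-local-connectivity machine over computable points of $X$ and using the fact that ``these finitely many rational rectangles cover $X$'' is semi-decidable when $X$ is computably compact; and the usual chain/telescoping construction in which the small connected sets are not used as paths directly but are refined recursively, the errors summing geometrically, so the approximants converge computably and the limit is onto because its closed range contains a dense subset of $X$. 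You also correctly flag the one defect of the statement as printed: connectedness of $X$ must be assumed (a two-point set satisfies the hypotheses verbatim), and this is harmless for the application to $C$. Two smaller points would need repair in a written-out version. First, the paper's definition of effective local connectivity omits the clause $V \subseteq U$; your Lebesgue-number extraction in step (b) needs it (otherwise the connected pieces need not be small), and it is clearly the intended reading. Second, ``picking a computable point of $X$ in each rectangle'' is not quite available uniformly: each rectangle of the cover does contain computable points of $X$, but one cannot uniformly locate one inside it; following pairwise-intersecting rectangles down through the finer covers does yield, uniformly, a computable point of $X$ within $O(2^{-k})$ of the rectangle, which is all your construction uses. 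With those adjustments the outline is sound, though it remains an outline; the quantitative bookkeeping in steps (b) and (c) is exactly what \cite{Couch.Daniel.McNicholl.2010} supplies.
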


We will use a number of concepts from \cite{Daniel.McNicholl.2009}.  However, as we will only work in $\R^2$, we specialize their definitions as follows.  To begin, a sequence of sets $(U_1, \ldots, U_n)$ is a \emph{chain} if $U_i \cap U_{i+1} \neq \emptyset$ whenever $1 \leq i <n$ and is a \emph{simple chain} if $U_i \cap U_j \neq \emptyset$ precisely when $|i - j| = 1$.

A simple chain $(V_1, \ldots, V_l)$ in a topological space \emph{refines} a simple chain $(U_1, \ldots, U_k)$ if there are numbers $s_1, \ldots, s_k$ such that $\sum s_j = l$, $s_j \geq 2$, 
\begin{eqnarray*}
\overline{V_1}, \ldots, \overline{V_{s_1}} & \subseteq & U_1\mbox{, and}\\
\overline{V_{s_1 + \ldots + s_{i-1} + 1}}, \ldots, \overline{V_{s_1 + \ldots + s_i}} & \subseteq & U_i\ \mbox{whenever $1 < i \leq k$.}\\
\end{eqnarray*}
We say that this refinement \emph{has type $(s_1, \ldots, s_k)$}. 
 
A \emph{witnessing chain} is a chain of rational rectangles.  We denote witnessing chains by $\omega$ and its accented and subscripted variants.  If $\omega = (R_1, \ldots, R_k)$ is a witnessing chain, then we let:
\begin{eqnarray*}
k_\omega & = & k\\
R_{\omega, i} & = & R_i\\
V_\omega & = & \bigcup_i R_i
\end{eqnarray*}

An \emph{arc chain} is a sequence of witnessing chains $(\omega_1, \ldots, \omega_l)$ such that 
$(V_{\omega_1}, \ldots, V_{\omega_l})$ is a simple chain.  We denote arc chains by $\mathfrak{p}$ and its accented and subscripted variants.  If $\mathfrak{p} = (\omega_1, \ldots, \omega_l)$ is an arc chain, then we let:
\begin{eqnarray*}
l_\mathfrak{p} & = & l\\
\omega_{\mathfrak{p}, i} & = & \omega_i\\
V_{\mathfrak{p}, i} & = & V_{\omega_i}\\
V_\mathfrak{p} & = & \bigcup_i V_{\omega_i}
\end{eqnarray*}
We also define the \emph{diameter of $\mathfrak{p}$} to be 
\[
\max_j \diam(V_{\omega_j}).
\]
We denote this quantity by $\diam(\mathfrak{p})$.

We say that an arc chain $\mathfrak{p}_0$ \emph{refines} an arc chain $\mathfrak{p}_1$ if 
$(V_{\mathfrak{p}_0, 1}, \ldots, V_{\mathfrak{p}_0, l_{\mathfrak{p}_0}})$
refines $(V_{\mathfrak{p}_1, 1}, \ldots, V_{\mathfrak{p}_1, l_{\mathfrak{p}_1}})$.
We say this refinement has type $(s_1, \ldots, s_{l_{\mathfrak{p}_1}})$ if the refinement of 
$(V_{\mathfrak{p}_1, 1}, \ldots, V_{\mathfrak{p}_1, l_{\mathfrak{p}_1}})$ by 
$(V_{\mathfrak{p}_0, 1}, \ldots, V_{\mathfrak{p}_0, l_{\mathfrak{p}_0}})$ has this type. 

A sequence of arc chains (possibly finite) $\mathfrak{p}_0, \mathfrak{p}_1, \ldots$ is \emph{descending} if 
$\mathfrak{p}_{i+1}$ refines $\mathfrak{p}_i$.

We will need the following.

\begin{theorem}\label{thm:COMPUTABLE.ARC}
If an arc $A \subseteq \R^2$ is computable, then there is an infinite, computable, and descending sequence of arc chains $\mathfrak{p}_0, \mathfrak{p}_1, \ldots$ such that $A = \bigcap_i V_{\mathfrak{p}_i}$ and $\diam(\mathfrak{p}_i) < 2^{-i}$.  Furthermore, if 
$\mathfrak{p}_0, \mathfrak{p}_1, \ldots$ is such a sequence of arc chains, then 
$\bigcap_i V_{\mathfrak{p}_i}$ is a computable arc.
\end{theorem}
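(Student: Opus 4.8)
The proof has two directions. For the forward direction, assume $A$ is a computable arc, witnessed by a computable homeomorphism $f : [0,1] \to A$. The plan is to build the arc chains $\mathfrak{p}_i$ by pulling back a carefully chosen cover of $[0,1]$ under $f$. First I would use uniform continuity of $f$ (which is effective since $f$ is computable on a compact domain) to find, for each $i$, a modulus $\delta_i$ such that $\diam f([a,b]) < 2^{-i}$ whenever $b - a < \delta_i$. Then I partition $[0,1]$ into consecutive subintervals of length less than $\delta_i$, refining the previous partition, and for each subinterval $J$ in the partition I compute a witnessing chain $\omega$ of rational rectangles covering $f(J)$ with $\diam(V_\omega) < 2^{-i}$; this is possible because $f(J)$ is computably compact and because, $f$ being injective, $f$ of the closure of the complementary part of $[0,1]$ stays a positive computable distance away from the relevant pieces, letting me choose the rectangles small enough that the simple-chain incidence pattern is exactly the one forced by the interval pattern. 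The key subtlety is arranging that consecutive $V_{\omega_j}$ overlap while non-consecutive ones do not, and that the refinement has each type $s_j \geq 2$; the latter is handled by subdividing each subinterval at least twice when passing from stage $i$ to stage $i+1$. That $A = \bigcap_i V_{\mathfrak{p}_i}$ follows because every point of $A$ lies in every stage's cover, and conversely any point in all the $V_{\mathfrak{p}_i}$ is a limit of points $f(x_i)$ with the $x_i$ forced into shrinking intervals, hence converges to some $f(x) \in A$.

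For the converse direction, suppose we are handed an infinite, computable, descending sequence of arc chains $\mathfrak{p}_0, \mathfrak{p}_1, \ldots$ with $\diam(\mathfrak{p}_i) < 2^{-i}$; I must show $X := \bigcap_i V_{\mathfrak{p}_i}$ is a computable arc. The plan is to define a homeomorphism $g : [0,1] \to X$ as follows. Using the refinement types, each arc chain $\mathfrak{p}_i$ induces a partition of $[0,1]$ into $l_{\mathfrak{p}_i}$ consecutive closed rational subintervals $I_{i,1}, \ldots, I_{i,l_{\mathfrak{p}_i}}$, where the refinement data tells us exactly how the stage-$(i+1)$ intervals sit inside the stage-$i$ intervals (each $I_{i,j}$ is subdivided into $s_j \geq 2$ pieces). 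For $x \in [0,1]$, at each stage $i$ there is a unique index $j_i(x)$ (or, at partition endpoints, two adjacent indices) with $x \in I_{i, j_i(x)}$, and I set $g(x)$ to be the unique point of $\bigcap_i \overline{V_{\mathfrak{p}_i, j_i(x)}}$; this intersection is a single point because the diameters shrink and the sets are nested-with-overlap in the pattern of a simple chain. I would verify continuity from the diameter bound, surjectivity onto $X$ from the fact that every point of $X$ is traced by a coherent sequence of chain-indices, and injectivity from the simple-chain property: if $x < y$ then at some stage their index intervals are non-adjacent, so $\overline{V_{\mathfrak{p}_i, j_i(x)}}$ and $\overline{V_{\mathfrak{p}_i, j_i(y)}}$ are disjoint. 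Computability of $g$ is immediate from the computability of the sequence $(\mathfrak{p}_i)$: from a name of $x$ one locates $x$ relative to the rational partition at each stage and outputs the shrinking rational rectangles.

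The main obstacle is the bookkeeping at partition boundary points in the converse direction — a dyadic-type rational $x$ that is an endpoint of two adjacent subintervals $I_{i,j}$ and $I_{i,j+1}$. There $g(x)$ should be the common point of $\overline{V_{\mathfrak{p}_i, j}} \cap \overline{V_{\mathfrak{p}_i, j+1}}$, and one must check this is consistent across all stages and coincides with the definition above; here the condition $s_j \geq 2$ in the refinement is essential, since it guarantees that the boundary between two stage-$i$ intervals is always interior to... no: rather, it guarantees the overlaps propagate correctly and a boundary point at stage $i$ remains a boundary point (between the last sub-piece of $I_{i,j}$ and the first sub-piece of $I_{i,j+1}$) at stage $i+1$. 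I would isolate this as a lemma: a descending sequence of simple chains with all refinement types having entries $\geq 2$ determines, for each $x \in [0,1]$, a unique point, and the resulting map is a homeomorphism onto the intersection. The forward direction's main annoyance is instead the effective verification of the exact incidence pattern of the rectangle cover, which I would handle by first fixing the combinatorial skeleton (which intervals are adjacent) and then shrinking rectangles using a computable lower bound on $\min\{ |f(s) - f(t)| : |s-t| \geq \delta_i,\ s,t \in [0,1]\}$, which is positive and computable by injectivity and compactness.
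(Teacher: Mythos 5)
Your proposal follows essentially the same route as the paper: the forward direction pulls back a repeatedly halved partition of $[0,1]$ under the computable injection and covers each image piece by a small witnessing chain (the paper's Lemma \ref{lm:CHAIN}), using injectivity to force the simple-chain incidence pattern, while the converse attaches rational interval labels to the chains according to the refinement types (the paper's Lemma \ref{lm:LABEL}, where $s_j \geq 2$ makes the labels shrink) and defines the map by the nested intersections $N_x$ exactly as in the paper's Lemma \ref{lm:ARC.CHAIN}. Your explicit use of an effective modulus of continuity and of a computable positive separation bound only spells out steps the paper leaves implicit, so no substantive difference remains.
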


Theorem \ref{thm:COMPUTABLE.ARC} is an immediate consequence of the results in \cite{Daniel.McNicholl.2009}.  As these results have not yet appeared in print, we include a proof here for which we will need some additional notions.

\begin{definition}\label{def:LABELLED}
A \emph{labelled arc chain} consists of a finite sequence of pairs \\
$((\omega_1, I_1), \ldots, (\omega_l, I_l))$ such that 
$(\omega_1, \ldots, \omega_l)$ is an arc chain, $(I_1, \ldots, I_l)$ is a simple chain of closed
subintervals of $[0,1]$ whose endpoints are rational, and $[0,1] = \bigcup_j I_j$.
\end{definition}

If $\Lambda = ((\omega_1, I_1), \ldots, (\omega_l, I_l))$ is a labelled arc chain, we let:
\begin{eqnarray*}
\mathfrak{p}_\Lambda & = & (\omega_1, \ldots, \omega_l)\\
I_{\Lambda,j } & = & I_j\\
\omega_{\Lambda, j} & = & \omega_j\\
l_\Lambda & = & l
\end{eqnarray*}

\begin{definition}\label{def:LABELLED.REFINES}
Let $\Lambda_0$, $\Lambda_1$ be labelled arc chains.  We say that $\Lambda_0$ \emph{refines} $\Lambda_1$ if there are numbers $s_1, \ldots, s_{l_{\Lambda_1}}$ such that $\mathfrak{p}_{\Lambda_0}$ is a refinement of 
$\mathfrak{p}_{\Lambda_1}$ of type $(s_1, \ldots, s_{l_{\Lambda_1}})$ and 
$(I_{\Lambda_0, 1}, \ldots, I_{\Lambda_0, l_{\Lambda_0}})$ is a refinement of 
$(I_{\Lambda_1, 1}, \ldots, I_{\Lambda_1, l_{\Lambda_1}})$ of the same type.  Again, we refer to 
$(s_1, \ldots,  s_{l_{\Lambda_1}})$ as the \emph{type} of this refinement.
\end{definition}

\begin{definition}\label{def:DIAM.LABELLED}
If $\Lambda$ is a labelled arc chain, then the \emph{diameter} of $\Lambda$ is 
\[
\max\{\diam(\mathfrak{p}_\Lambda), \diam(I_{\Lambda, 1}), \ldots, \diam(I_{\Lambda, l_\Lambda})\}.
\]
We denote this by $\diam(\Lambda)$.
\end{definition}

We say that a sequence of labelled arc chains $\Lambda_0, \Lambda_1, \ldots$ (finite or infinite) is \emph{descending} if $\Lambda_{i+1}$ refines $\Lambda_i$.

The proof of Theorem \ref{thm:COMPUTABLE.ARC} now turns on the following three lemmas.

\begin{lemma}\label{lm:ARC.CHAIN}
Suppose $\{\Lambda_j\}_{j \in \N}$ is an infinite descending sequence of labelled arc chains such that 
$\lim_{j \rightarrow \infty} \diam(\Lambda_j) = 0$.  
\begin{enumerate}
	\item For each $x \in [0,1]$, the set 
	\[
	N_x =_{df} \bigcap_{j=0}^\infty \bigcup \{V_{\omega_{\Lambda_j, i}}\ :\ x \in I_{\Lambda_{j,i}}\}
	\]
	contains exactly one point.
	
	\item If $\{\Lambda_j\}_{j \in \N}$ is computable, and if for each $x \in [0,1]$ we define $f(x)$ to be the unique point of $N_x$, then $f$ is a computable injective map.
\end{enumerate}
\end{lemma}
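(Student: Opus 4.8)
The plan is to establish the two parts in sequence, using the descending-refinement structure to control nested intersections.

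For part (1), I would first fix $x \in [0,1]$ and abbreviate $W_j(x) = \bigcup\{V_{\omega_{\Lambda_j,i}} : x \in I_{\Lambda_j,i}\}$, so that $N_x = \bigcap_j W_j(x)$. The first step is to show the sets $\overline{W_j(x)}$ are nested decreasing: since $\Lambda_{j+1}$ refines $\Lambda_j$ with some type $(s_1,\dots,s_{l_{\Lambda_j}})$, whenever $x \in I_{\Lambda_{j+1},m}$ we have $x$ in the unique block-parent $I_{\Lambda_j,i}$ with $\overline{I_{\Lambda_{j+1},m}} \subseteq I_{\Lambda_j,i}$, and correspondingly $\overline{V_{\omega_{\Lambda_{j+1},m}}} \subseteq V_{\omega_{\Lambda_j,i}}$ by the refinement condition on the witnessing chains; hence $\overline{W_{j+1}(x)} \subseteq W_j(x)$. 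One subtlety: $x$ may lie in two consecutive intervals $I_{\Lambda_j,i}$ and $I_{\Lambda_j,i+1}$ (a shared rational endpoint), so $W_j(x)$ is a union of at most two of the $V_{\omega_{\Lambda_j,i}}$; the refinement structure still forces each piece of $W_{j+1}(x)$ into one of these, so nesting survives. Next, each $\overline{W_j(x)}$ is a nonempty compact set (finite union of closures of finite unions of closed rational rectangles), and $\diam(\overline{W_j(x)}) \le 2\diam(\Lambda_j) \to 0$ — the factor $2$ because two consecutive $V$'s overlap, so their union has diameter at most twice the max. By the finite-intersection / Cantor property for nested compacta of vanishing diameter, $\bigcap_j \overline{W_j(x)}$ is a single point; and since $W_{j+1}(x) \supseteq \overline{W_{j+2}(x)}$, this coincides with $N_x = \bigcap_j W_j(x)$, which therefore is exactly one point.

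For part (2), assume $\{\Lambda_j\}_{j\in\N}$ is computable. \emph{Computability of $f$:} given a name of $x \in [0,1]$ and a target precision $2^{-k}$, choose $j$ with $2\diam(\Lambda_j) < 2^{-k}$ (possible computably since the $\Lambda_j$ are rational data and their diameters are computable and tend to $0$); then search, using the name of $x$, for an index $i$ with $x$ in the interior of $I_{\Lambda_j,i}$, or — in the boundary case — determine the at-most-two indices with $x \in I_{\Lambda_j,i}$; output a rational rectangle containing $\overline{W_j(x)}$, which pins $f(x)$ to within $2^{-k}$. Some care is needed that the search terminates: if $x$ is an interior point of some $I_{\Lambda_j,i}$ the name of $x$ will eventually certify this; if $x$ is a shared endpoint we detect it lies in both neighbouring intervals and use their union. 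This yields a procedure producing arbitrarily good rational approximations to $f(x)$ from any name of $x$, i.e. an $e$-reduction from names of $x$ to a name of $f(x)$, so $f$ is computable. \emph{Injectivity:} suppose $x \ne y$ in $[0,1]$, say $x < y$. Pick $j$ large enough that some interval $I_{\Lambda_j,i}$ lies strictly between $x$ and $y$, or at least so that the set of indices $i$ with $x \in I_{\Lambda_j,i}$ is disjoint from the set of indices with $y \in I_{\Lambda_j,i}$ and these two index sets are not adjacent; concretely, since $\diam(I_{\Lambda_j,i}) \to 0$ uniformly and the $I$'s form a simple chain covering $[0,1]$ in order, once $\diam(\Lambda_j) < |x-y|/3$ the index sets for $x$ and $y$ have a gap of at least one full interval between them. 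Because $(V_{\Lambda_j,1},\dots,V_{\Lambda_j,l_{\Lambda_j}})$ is a \emph{simple} chain, $V_{\omega_{\Lambda_j,i}} \cap V_{\omega_{\Lambda_j,i'}} = \emptyset$ whenever $|i-i'| \ge 2$; hence $W_j(x) \cap W_j(y) = \emptyset$, and since $f(x) \in W_j(x)$, $f(y) \in W_j(y)$, we get $f(x) \ne f(y)$.

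I expect the main obstacle to be the bookkeeping around boundary points — values of $x$ that are shared rational endpoints of consecutive $I_{\Lambda_j,i}$ — since there $W_j(x)$ is a two-block union and one must verify both that the nesting $\overline{W_{j+1}(x)} \subseteq W_j(x)$ still holds through a refinement (which it does, because refinement of the interval chain carries shared endpoints to shared endpoints and the type matches on the witnessing side) and that the computable search in part (2) correctly detects and handles this case rather than hanging. Everything else is a routine application of: nested compacta of shrinking diameter intersect in a point; refinement pushes closures inside predecessors; simple chains have nonadjacent links disjoint; and rational data make all the relevant diameters and membership tests computable.
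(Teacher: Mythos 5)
Your part (1) and your injectivity argument are essentially the paper's own proof: the closures of the $V$'s attached to the (at most two, consecutive) intervals containing $x$ nest down through each refinement, compactness plus vanishing diameter gives a single point, and for $x \neq y$ one passes to a level where the index sets are nonadjacent and uses disjointness of nonadjacent links of a simple chain. The gap is in the computability of $f$. Your procedure fixes a level $j$ and then asks to ``determine the at-most-two indices with $x \in I_{\Lambda_j,i}$,'' splitting into the case that $x$ is interior to some $I_{\Lambda_j,i}$ and the case that $x$ is a shared endpoint, where you ``detect it lies in both neighbouring intervals.'' That detection is not computable from a name of $x$: membership of $x$ in a closed rational interval is not semi-decidable, and asserting that $x$ lies in two consecutive closed intervals amounts to asserting that $x$ equals their common rational endpoint, which no finite portion of a name of $x$ can certify. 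Nor can you escape by searching at deeper levels: if $x$ is the shared endpoint of $I_{\Lambda_j,i}$ and $I_{\Lambda_j,i+1}$, then (by the simple-chain property) these are the only level-$j$ intervals containing $x$, every interval at a level $j' \geq j$ containing $x$ is a subset of one of them, and hence has $x$ as an endpoint; so the interior certificate never arrives and the endpoint branch is never entered in finite time. Such $x$ exist (e.g.\ under the uniform-partition labelling of Lemma \ref{lm:LABEL}, and the same non-termination already occurs at $x=0$ and $x=1$), so for them your machine produces no valid name of $f(x)$.

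The repair is to drop the exact determination of the indices in favour of a decidable over-approximation, which is what the paper does: from the name of $x$ read any rational interval $I \ni x$, and use all indices $i$ with $I \cap I_{\Lambda_j,i} \neq \emptyset$ --- a condition on rational data --- listing the rational rectangles $R$ with $R \supseteq \bigcup\{V_{\omega_{\Lambda_j,i}} : I \cap I_{\Lambda_j,i} \neq \emptyset\}$. This union still contains $f(x)$, and for small $I$ the extra indices are adjacent to ones whose interval contains $x$, so its diameter is bounded by a constant multiple of $\diam(\Lambda_j)$ and tends to $0$; hence all and only the rational rectangles containing $f(x)$ get listed, which is exactly what a name of $f(x)$ requires. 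With that change your argument goes through; as written, the interior-versus-endpoint case split is a non-effective test and the step handling the endpoint case would fail.
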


\begin{proof}
For each $j$, there are at most two values of $i$ for which $x \in I_{\Lambda_j, i}$.  Furthermore, if there are two such values of $i$, then they differ by $1$.  Abbreviate $V_{\omega_{\Lambda_j, i}}$ by $V_{j,i}$, $I_{\Lambda_j, i}$ by $I_{j,i}$, and $l_{\Lambda_j}$ by $l_j$.  

Let $j_0 < j_1$.  
We show that 
\[
\overline{\bigcup\{V_{j_1, i}\ :\ x \in I_{j_1, i}\}} \subseteq \bigcup\{V_{j_0, i}\ :\ x \in I_{j_0, i}\}.
\]

Let $i_k$ be the smallest number such that $x \in I_{j_k, i_k}$.  
$\Lambda_{j_1}$ refines $\Lambda_{j_0}$;  let $(s_1, \ldots, s_{l_{j_0}})$ be the type of this refinement.  
Let $S_0 = \{1, \ldots, s_1\}$, and let $S_i = \{s_1 + \ldots s_{i-1}+1, \ldots, s_1 + \ldots s_i \}$ whenever $1 < i \leq l_{j_0}$.
There is a unique $u$ such that $i_1 \in S_u$.  Hence, 
$I_{j_1, i_1} \subseteq I_{j_0, u}$ and $\overline{V_{j_1, i_1}} \subseteq V_{j_0, u}$.  Furthermore, $x \in I_{j_0, u} \cap I_{j_0, i_0}$.
If $x \not \in V_{j_1, i_1 + 1}$, then we are done.  Suppose $x \in V_{j_1, i_1 + 1}$.  
If $i_1 + 1 \in S_u$, then we are done.  Otherwise, it must be that $i_1 + 1 \in S_{u+1}$ and we are done.

It now follows that $N_x$ is non-empty.  Since $\lim_{j \rightarrow \infty} \diam(\mathfrak{p}_{\Lambda_j}) = 0$, 
it follows that $N_x$ contains exactly one point.

Now, suppose $\{\Lambda_j\}_j$ is computable.  Given a name of an $x \in [0,1]$, we list a rational rectangle $R$ if and only if there exist $I, j,i$ such that $x \in I$ and 
\[
R \supseteq \bigcup\{V_{j,i}\ :\ I \cap I_{j,i} \neq \emptyset\}.
\]
It follows that all and only those rational rectangles containing $f(x)$ are listed.  
Hence, $f$ is computable.

Suppose $0 \leq x_1 < x_2 \leq 1$.  Thus, there is a $j_0$ such that 
$|i_1 - i_2| \geq 2$ whenever $j \geq j_0$, $x_1 \in I_{j, i_1}$, and $x_2 \in I_{j, i_2}$.
Let 
\[
S_{x,j} = \bigcup\{V_{j,i}\ :\ x \in I_{j,i}\}.
\]
By definition, $f(x) \in S_{x,j}$.  However, whenever $j \geq j_0$, $S_{x_1, j} \cap S_{x_2, j} = \emptyset$.  So, $f(x_1) \neq f(x_2)$.
\end{proof}

\begin{lemma}\label{lm:LABEL}
If $\{\mathfrak{p}_j\}_{j \in \N}$ is a computable and descending sequence of arc chains for which 
$\lim_{j \rightarrow \infty} \diam(\mathfrak{p}_j) = 0$, then there is a computable and descending sequence of 
arc chains $\{\Lambda_j\}_{j \in \N}$ for which $\mathfrak{p}_{\Lambda_j} = \mathfrak{p}_j$ and such that 
$\lim_{j \rightarrow \infty} \diam(\Lambda_j) = 0$.
\end{lemma}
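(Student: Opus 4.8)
The plan is to construct the $\Lambda_j$ by recursion on $j$, maintaining the invariant that $(I_{\Lambda_j,1},\ldots,I_{\Lambda_j,l_{\mathfrak{p}_j}})$ is a partition of $[0,1]$ into $l_{\mathfrak{p}_j}$ non-degenerate closed intervals with rational endpoints, listed in increasing order, so that $I_{\Lambda_j,i}=[a_i,b_i]$ with $a_{i+1}=b_i$. Any such partition is automatically a simple chain (consecutive intervals meet in a single point, non-consecutive ones are disjoint) whose union is $[0,1]$, so $\Lambda_j:=((\omega_{\mathfrak{p}_j,1},I_{\Lambda_j,1}),\ldots,(\omega_{\mathfrak{p}_j,l_{\mathfrak{p}_j}},I_{\Lambda_j,l_{\mathfrak{p}_j}}))$ is a genuine labelled arc chain with $\mathfrak{p}_{\Lambda_j}=\mathfrak{p}_j$. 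For the base step I would simply take $I_{\Lambda_0,i}=[(i-1)/l_{\mathfrak{p}_0},\,i/l_{\mathfrak{p}_0}]$.

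For the recursion step, suppose $\Lambda_j$ has been built and let $(s_1,\ldots,s_{l_{\mathfrak{p}_j}})$ be a type of the refinement $\mathfrak{p}_{j+1}$ of $\mathfrak{p}_j$. Such a type can be found computably from $\mathfrak{p}_j$ and $\mathfrak{p}_{j+1}$ by searching through the finitely many candidate tuples with $s_i\ge 2$ and $\sum_i s_i=l_{\mathfrak{p}_{j+1}}$ and testing, for each, the required inclusions $\overline{V_{\omega_{\mathfrak{p}_{j+1},m}}}\subseteq V_{\omega_{\mathfrak{p}_j,i}}$; each such inclusion concerns only finitely many rational rectangles and is therefore decidable, and at least one candidate succeeds since the given sequence is descending. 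Then I would subdivide each $I_{\Lambda_j,i}=[a_i,b_i]$ into its $s_i$ equal closed pieces $[a_i+\tfrac{k-1}{s_i}(b_i-a_i),\,a_i+\tfrac{k}{s_i}(b_i-a_i)]$, $k=1,\ldots,s_i$, and let $(I_{\Lambda_{j+1},1},\ldots,I_{\Lambda_{j+1},l_{\mathfrak{p}_{j+1}}})$ be the concatenation of these blocks over $i=1,\ldots,l_{\mathfrak{p}_j}$; pairing it with $\mathfrak{p}_{j+1}$ defines $\Lambda_{j+1}$.

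Three verifications then finish the proof. First, the invariant persists: each block of $s_i\ge 2$ equal pieces partitions $[a_i,b_i]$ into non-degenerate rational intervals in increasing order, and since $b_i=a_{i+1}$, the concatenation is again an increasing partition of $[0,1]$. Second, $\Lambda_{j+1}$ refines $\Lambda_j$ in the sense of Definition \ref{def:LABELLED.REFINES}: by construction the first $s_1$ intervals of $\Lambda_{j+1}$ lie in $I_{\Lambda_j,1}$, the next $s_2$ in $I_{\Lambda_j,2}$, and so on, so $(I_{\Lambda_{j+1},\cdot})$ refines $(I_{\Lambda_j,\cdot})$ of type $(s_1,\ldots,s_{l_{\mathfrak{p}_j}})$, the same type as $\mathfrak{p}_{j+1}$ over $\mathfrak{p}_j$. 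Third, the diameters vanish: since $s_i\ge 2$, every piece of the subdivision of $I_{\Lambda_j,i}$ has length $\diam(I_{\Lambda_j,i})/s_i\le\tfrac12\diam(I_{\Lambda_j,i})$, whence $\max_i\diam(I_{\Lambda_{j+1},i})\le\tfrac12\max_i\diam(I_{\Lambda_j,i})$ and thus $\max_i\diam(I_{\Lambda_j,i})\le 2^{-j}$; together with the hypothesis $\diam(\mathfrak{p}_j)\to 0$ and Definition \ref{def:DIAM.LABELLED} this gives $\diam(\Lambda_j)\to 0$. Computability of $j\mapsto\Lambda_j$ is immediate from the recursion, the computability of $\{\mathfrak{p}_j\}$, and the computable search for the types.

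I expect the only point that is not pure bookkeeping to be the claim that a refinement type can be extracted computably from the two arc chains, i.e.\ that the relevant inclusions of finite unions of rational rectangles are decidable; once that is granted, everything is an explicit computation with rational endpoints, and the equal-subdivision choice makes the diameter bound come out for free.
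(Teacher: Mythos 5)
Your proof is correct and follows essentially the same route as the paper's: take the uniform partition of $[0,1]$ at stage $0$ and, at each successive stage, subdivide each $I_{\Lambda_j,i}$ uniformly into $s_i$ pieces according to the type of the refinement $\mathfrak{p}_{j+1}$ of $\mathfrak{p}_j$, with $s_i\ge 2$ forcing the interval diameters to halve. The only difference is that you explicitly justify the computable extraction of the refinement type, a point the paper leaves implicit; your justification is sound.
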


\begin{proof}
Let $I_{\Lambda_0, 1}, \ldots, I_{\Lambda_0, l_{\Lambda_0}}$ be the uniform partition
of $[0,1]$ into $l_{\Lambda_0}$ subintervals.

Suppose $I_{\Lambda_j, 1}, \ldots, I_{\Lambda_j, l_{\Lambda_j}}$ have been defined.
Suppose $\mathfrak{p}_{j+1}$ is a refinement of $\mathfrak{p}_j$ of type 
$(s_1, \ldots, s_k)$.  Let $I_{\Lambda_{j+1}, 1}, \ldots, I_{\Lambda_{j+1}, s_1}$ be the uniform 
partition of $I_{\Lambda_j, 1}$ into $s_1$ subintervals.  Let 
$I_{\Lambda_{j+1}, (s_1 + \ldots + s_{i-1} + 1)}, \ldots, I_{\Lambda_{j+1}, (s_1 + \ldots + s_i)}$
be the uniform partition of $I_{\Lambda_j, i}$ into $s_i$ subintervals when $1 < i \leq k$.
Since each $s_i$ is at least $2$, the conclusion follows.
\end{proof}

\begin{lemma}\label{lm:CHAIN}
Suppose $A$ is an arc and $\epsilon > 0$.  Then, there is a witnessing chain 
$\omega = (R_1, \ldots, R_k)$ such that $A \subseteq V_\omega$, $R_j \cap A \neq \emptyset$, and $\diam(R_j) < \epsilon$.
\end{lemma}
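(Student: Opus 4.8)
\emph{Proof proposal.}

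The plan is to transport a fine covering from $[0,1]$ across a homeomorphism and exploit uniform continuity. Fix a homeomorphism $h : [0,1] \to A$; since $[0,1]$ is compact, $h$ is uniformly continuous. First I would use uniform continuity to choose $\delta > 0$ such that $|h(s) - h(t)| < \epsilon/4$ whenever $s,t \in [0,1]$ and $|s - t| < \delta$. Then I would pick points $0 = t_0 < t_1 < \cdots < t_k = 1$ with $t_i - t_{i-1} < \delta$ for each $i$, and set $A_i = h([t_{i-1}, t_i])$. Each $A_i$ is then a compact subset of $A$ with $\diam(A_i) \le \epsilon/4$, and $A = \bigcup_{i=1}^k A_i$ since $[0,1] = \bigcup_{i=1}^k [t_{i-1}, t_i]$ and $h$ is surjective onto $A$.

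Next, for each $i$ I would produce a rational rectangle $R_i$ with $A_i \subseteq R_i$ and $\diam(R_i) < \epsilon$. This is available because $A_i$ is compact of diameter at most $\epsilon/4$: its coordinate-wise bounding box is a (possibly degenerate) closed rectangle whose sides have length at most $\epsilon/4$, hence whose diameter is at most $\epsilon/(2\sqrt 2) < \epsilon/2$; enlarging each side outward by a sufficiently small rational amount gives an open rational rectangle still containing $A_i$ and still of diameter below $\epsilon$. Put $\omega = (R_1, \ldots, R_k)$.

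Finally I would check the three asserted properties. We have $A = \bigcup_i A_i \subseteq \bigcup_i R_i = V_\omega$, giving $A \subseteq V_\omega$. Each $R_i$ meets $A$ because $\emptyset \neq A_i \subseteq R_i \cap A$. And for $1 \le i < k$ the point $h(t_i)$ lies in $A_i \cap A_{i+1} \subseteq R_i \cap R_{i+1}$, so consecutive rectangles overlap and $\omega$ is indeed a chain. Thus $\omega$ is a witnessing chain of the required kind.

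I do not expect a serious obstacle in this lemma: the only mildly delicate point is the existence of an enclosing rational rectangle of controlled diameter, which is dispatched by the bounding-box estimate above, and the rest is routine bookkeeping around the uniform continuity of $h$.
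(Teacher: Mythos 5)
Your proof is correct, but it takes a different route from the paper. The paper does not use the parametrization at all: it takes an arbitrary finite cover of the compact set $A$ by rational rectangles of diameter less than $\epsilon$, each meeting $A$, and then invokes Theorem 3-4 of Hocking and Young to extract a subfamily $(R_1,\ldots,R_k)$ whose traces on $A$ form a \emph{simple} chain covering $A$. You instead fix a homeomorphism $h:[0,1]\to A$, use uniform continuity to partition $[0,1]$ finely, push the subintervals forward to subarcs $A_i$ of diameter at most $\epsilon/4$, and enclose each in a rational rectangle of diameter less than $\epsilon$; adjacency of consecutive links is then witnessed by the shared endpoint $h(t_i)$. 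Since a witnessing chain is by definition only required to be a chain (consecutive links meet), not a simple chain, your construction meets the statement exactly, and your bounding-box step for producing the enclosing rational rectangles is sound. What each approach buys: the paper's argument yields the stronger conclusion that the links can be arranged into a simple chain along $A$, at the cost of citing a nontrivial chaining theorem; yours is elementary and self-contained, and the possible intersections between non-consecutive rectangles that it tolerates are harmless both for the lemma as stated and for its use in the proof of Theorem \ref{thm:COMPUTABLE.ARC}, where disjointness is imposed only between distinct witnessing chains $\omega_{j,i_1}$, $\omega_{j,i_2}$, not within a single one.
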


\begin{proof}
Since $A$ is compact, there are rational rectangles $S_1, \ldots, S_l$ that cover $A$, whose diameters are smaller than $\epsilon$, and each of which contains a point of $A$.  By Theorem 3-4 of \cite{Hocking.Young.1988}, there exist $R_1, \ldots, R_k \in \{S_1, \ldots, S_l\}$ such that 
$(R_1 \cap A, \ldots, R_k \cap A)$ is a simple chain that covers $A$.  Hence, 
$(R_1, \ldots, R_k)$ is a chain that covers $A$.
\end{proof}

\noindent\it Proof of Theorem \ref{thm:COMPUTABLE.ARC}:\rm\ Suppose $A \subseteq \R^2$ is a computable arc.  Let $f$ be a computable injective map of $[0,1]$ onto $A$.

For each $j \in \N$, let $I_{j,1}, \ldots, I_{j,2^j}$ be the uniform partition of $[0,1]$ into subintervals of 
length $2^{-j}$.  Since $f$ is injective, $f[I_{j, i_1}] \cap f[I_{j, i_2}] = \emptyset$ whenever $i_1 - i_2| \geq 2$.  It follows from Lemma \ref{lm:CHAIN} that for each $j,i$ we can choose a witnessing chain $\omega_{j,i}$ such that $f[I_{j,i}]$ is contained in $V_{\omega_{j,i}}$ and 
$V_{\omega_{j, i_1}} \cap V_{\omega_{j, i_2}} = \emptyset$ whenever $|i_1 - i_2| \geq 2$.
Furthermore, we can choose each $\omega_{j,i}$ so that $\diam(\omega_{j,i}) < 2^{-j}$.  Furthermore, we can ensure that 
\[
I_{j+1, i_1} \subseteq I_{j, i_2}\ \Rightarrow\ \overline{V_{\omega_{j+1, i_1}}} \subseteq V_{\omega_{j, i_2}}.
\]
We can then set 
\[
\mathfrak{p}_j = (\omega_{j,1}, \ldots, \omega_{j, 2^j}).
\]

The other direction follows from Lemmas \ref{lm:ARC.CHAIN} and \ref{lm:LABEL}.

\section{The confinement of length}

In this section, we give two proofs of the following.

\begin{theorem}\label{thm:NOT.ON.RECT}
There is a point on a computable arc that does not belong to any computable rectifiable curve.
\end{theorem}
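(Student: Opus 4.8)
The plan is to separate the two classes of points by a single numerical invariant, effective (constructive) Hausdorff dimension. The key fact I would use is the effective counterpart of the classical observation that a rectifiable curve has Hausdorff dimension $1$: \emph{every point lying on a computable rectifiable curve has effective Hausdorff dimension at most $1$}. This follows from the results of Gu, Lutz, and Mayordomo \cite{Gu.Lutz.Mayordomo.2006}, but it can also be seen directly. If $\Gamma$ is the range of a computable $f\colon[0,1]\to\R^2$ of finite length $L$, then $\Gamma$ is computably compact, and for each $k$ one can search effectively for a covering of $\Gamma$ by rational rectangles of diameter $<2^{-k}$ of least cardinality (using that ``$\Gamma\subseteq U$'' is semidecidable for $\Gamma$ computably compact and $U$ a finite union of rational rectangles); since some fixed multiple of $(L+1)2^k$ such rectangles always suffice, the dovetailed search halts and returns a covering $\mathcal C_k$ of cardinality $O(2^k)$. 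Given $p\in\Gamma$, the center of the rectangle of $\mathcal C_k$ containing $p$ is a $2^{-k}$-approximation of $p$ computable from $k$ together with the index of that rectangle; hence the Kolmogorov complexity of a $2^{-k}$-approximation of $p$ is at most $k+O(\log k)$, so the effective dimension of $p$ is at most $1$. Thus it suffices to exhibit a computable arc carrying a point of effective dimension strictly greater than $1$.

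For the ``effective dimension'' proof I would take $A$ to be the von Koch curve (any self-similar Jordan arc with the open set condition and similarity dimension above $1$ would do). First one checks that $A$ is a computable arc: either note that the natural parametrization of $A$ by base-$4$ digit strings descends to a computable continuous bijection $[0,1]\to A$, hence a homeomorphism by compactness of $[0,1]$; or construct an explicit infinite, computable, descending sequence of arc chains $\mathfrak{p}_0,\mathfrak{p}_1,\dots$ with $\diam(\mathfrak{p}_i)<2^{-i}$, each obtained from its predecessor by following the four similarity pieces inside every witnessing chain, so that $A=\bigcap_i V_{\mathfrak{p}_i}$, and apply Theorem \ref{thm:COMPUTABLE.ARC}. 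Now fix a Martin-Löf random $\alpha\in\{0,1,2,3\}^\omega$ and let $p$ be the point of $A$ with address $\alpha$; since $\alpha$ is not eventually constant, $\alpha$ is its unique address. Knowing the length-$n$ prefix of $\alpha$ locates $p$ within a piece of diameter $\Theta(3^{-n})$, and conversely a $\Theta(3^{-n})$-approximation of $p$ recovers all but the last $O(\log n)$ digits of that prefix; with randomness of $\alpha$ this yields complexity $2n\pm O(\log n)$ at precision $3^{-n}$, so $p$ has effective dimension $2/\log_2 3=\log 4/\log 3>1$. Hence $p$ lies on the computable arc $A$ but, by the first paragraph, on no computable rectifiable curve. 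A second, randomness-flavored proof replaces $A$ by a computable arc $A'$ of positive Lebesgue measure (an effectivized Osgood curve, built from a descending arc-chain sequence whose witnessing rectangles are kept deliberately fat so that $\mu(V_{\mathfrak{p}_i})$ stays bounded below): the Martin-Löf random points have full measure, so $A'$ contains such a point, while a computable rectifiable curve of length $L$ is covered, for each $k$, by $O(2^k)$ rational rectangles of diameter $2^{-k}$ and hence total area $O(2^{-k})$, so it is effectively null and contains no random point.

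I expect the main obstacle in the first proof to be the effective-dimension computation for $p$: one must match ``precision $2^{-r}$'' against ``number of known base-$4$ digits of $\alpha$'' uniformly in $r$, and control the logarithmic losses caused by the shared endpoints of adjacent Koch pieces, so that the $\liminf$ defining the effective dimension provably exceeds $1$; a secondary point is to make the least-cardinality covering search rigorous (or, alternatively, to pin down exactly what \cite{Gu.Lutz.Mayordomo.2006} supplies). In the second proof the work is almost entirely in the construction: producing a computable arc of positive measure and verifying, inside the arc-chain formalism, that the intersection $\bigcap_i V_{\mathfrak{p}_i}$ is genuinely a simple arc rather than merely a curve.
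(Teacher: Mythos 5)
Your proposal is correct and takes essentially the same two routes as the paper: a Koch-type computable arc containing a point of effective Hausdorff dimension greater than $1$ (the paper uses the quadratic type-2 von Koch curve and cites Lutz's work for the existence of such a point rather than constructing one from a random address), and a positive-measure computable Osgood arc containing a Martin-L\"of random point, with the results of \cite{Gu.Lutz.Mayordomo.2006} excluding such points from computable rectifiable curves. One small caveat: the ``least cardinality covering'' search in your direct argument is not effectively realizable as stated (minimality of a cover is not semidecidable), but this is inessential, since the needed bound is exactly what \cite{Gu.Lutz.Mayordomo.2006} supplies --- which is how the paper itself proceeds.
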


For the first proof, we employ some concepts from algorithmic randomness.
Call an open set $U \subseteq \R^2$ \emph{computably enumerable} (\emph{c.e.}) if there is a computable sequence of rational rectangles $\{R_n\}_{n \in \N}$ such that $U = \bigcup_n R_n$.  An infinite sequence $\{U_n\}_{n \in \N}$ of open sets is \emph{uniformly c.e.} if there is a computable double sequence of rational rectangles $\{R_{n,k}\}_{n,k \in \N}$ such that 
$U_n = \bigcup_k R_{n,k}$ for each $n$.   A set $X \subseteq \R^2$ is said to have \emph{effective measure zero} if there is a descending uniformly \emph{c.e.} sequence of open sets $\{U_n\}_{n \in \N}$ such that 
the Lebesgue measure of each $U_n$ is at most $2^{-n}$.  Such a sequence of open sets is said to be a \emph{Martin-L\"of test}.  A point $p \in \R^2$ is \emph{Martin-L\"of random} if
$\{p\}$ does not have effective measure zero.

It follows from the constructions of Osgood (see \cite{Sagan.1994}) that for each $k \geq 1$, there is a computable arc $A \subseteq [0,1] \times [0,1]$ with measure $1 - 2^{-k}$.  The complements of these arcs then yield a Martin-L\"of test (after taking intersections with $(0,1) \times (0,1)$).  Thus, every Martin-L\"of random point in $(0,1) \times (0,1)$ lies on a computable arc.  However, by the results in \cite{Gu.Lutz.Mayordomo.2006}, no Martin-L\"of random point lies on a computable rectifiable curve.  

A pleasant side-effect of this proof is that \it every Martin-L\"of random point lies on a computable arc\rm. 

For our second proof of Theorem \ref{thm:NOT.ON.RECT}, we use effective Hausdorff dimension.  Let $A$ denote the quadratic von Koch curve of type 2 (see, \emph{e.g.}, \cite{Falconer.2003}).  Then, $A$ is a computable arc.  At the same time, the Hausdorff dimension of $A$ is 3/2.  Hence, there is a point on $A$ whose effective Hausdorff dimension is larger than $1$ (see \cite{Lutz.2003}).  However, as is pointed out in \cite{Gu.Lutz.Mayordomo.2006}, a point on a computable rectifiable curve has effective Hausdorff dimension at most $1$. \footnote{I am very grateful to Jack Lutz for showing me this line of argument.}

An interesting consequence of this proof is that even very natural fractal curves which are easy to visualize contain points which do not lie on any computable rectifiable curve.

\section{The power of backtracking}

Let $\{\mathfrak{p}_i^e\}_{e \in \N, i \leq k_e}$ be an effective enumeration of all computable descending sequences of arc chains.  That is:
\begin{itemize}
	\item $i,e \mapsto \mathfrak{p}_i^e$ is a computable partial function.
	
	\item $k_e \leq \aleph_0$.
	
	\item $\mathfrak{p}_i^e$ is defined if and only if $i < k_e$.
	
	\item $\mathfrak{p}_{i+1}^e$ refines $\mathfrak{p}_i^e$ if $i + 1< k_e$.
	
	\item $\diam(\mathfrak{p}_i^e) < 2^{-i}$ if $i < k_e$.
\end{itemize}

Let 
\[
A_e = \bigcap_{i < k_e} V_{\mathfrak{p}_i^e}.
\]

By Theorem \ref{thm:COMPUTABLE.ARC}, an arc $A \subseteq \R^2$ is computable if and only if there is an $e \in \N$ such that $k_e = \aleph_0$ and $A = A_e$.

Although the proof of the following is a simple exercise, it will be useful enough in the proof of Theorem \ref{thm:POWER.RETRACE} to state it as a proposition.

\begin{proposition}\label{prop:LIMIT.POINTS}
If $S$ is a connected subset of a topological space, and if $X \subseteq \overline{S} - S$, then $S \cup X$ is connected.
\end{proposition}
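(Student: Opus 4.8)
The plan is to prove the contrapositive-flavored closure statement directly from the definition of connectedness via separations. Suppose, toward a contradiction, that $S \cup X$ is disconnected, so there exist disjoint open sets $U, V$ in the ambient topological space with $S \cup X \subseteq U \cup V$, $(S \cup X) \cap U \neq \emptyset$, and $(S \cup X) \cap V \neq \emptyset$. The first key step is to observe that $S$ itself is covered by $U \cup V$ with $U \cap V = \emptyset$; since $S$ is connected, $S$ cannot meet both $U$ and $V$, so without loss of generality $S \subseteq U$ and $S \cap V = \emptyset$.

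The second key step is to leverage the hypothesis $X \subseteq \overline{S} - S$. Since $(S \cup X) \cap V$ is nonempty and $S \cap V = \emptyset$, there must be a point $p \in X \cap V$. But $p \in X \subseteq \overline{S}$, so every open neighborhood of $p$ meets $S$. In particular $V$ is an open neighborhood of $p$, so $V \cap S \neq \emptyset$, contradicting $S \cap V = \emptyset$. Hence no such separation exists and $S \cup X$ is connected.

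There is really no substantive obstacle here; the only thing to be careful about is the bookkeeping of which ambient space the open sets $U$ and $V$ live in. One should phrase the argument using open subsets of the whole topological space (equivalently, work throughout with the subspace topology on $S \cup X$ and then pull back to neighborhoods in the ambient space when invoking $p \in \overline{S}$). Either formulation works; I would present it with ambient open sets since that makes the closure hypothesis $X \subseteq \overline{S}$ most transparent. A one-line remark that connectedness of $S$ is used exactly once — to force $S$ entirely into one side of the separation — makes the structure clear, and the role of $X \subseteq \overline{S} - S$ is simply to prevent the other side from being "left over."
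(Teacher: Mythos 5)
Your proof is correct and is the standard argument for this classical fact; the paper gives no proof at all, noting only that the proposition is ``a simple exercise,'' so there is nothing to compare against beyond confirming that your route is the expected one. One minor point on the bookkeeping you raise yourself: in a general topological space a separation of the subspace $S \cup X$ only furnishes ambient open sets $U, V$ with $(S \cup X) \cap U \cap V = \emptyset$ rather than $U \cap V = \emptyset$, but your argument goes through verbatim under that weaker hypothesis, since connectedness of $S$ still forces $S \cap V = \emptyset$ and the point $p \in X \cap V \subseteq \overline{S}$ still yields the contradiction $V \cap S \neq \emptyset$.
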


\begin{theorem}\label{thm:POWER.RETRACE}
There is a computably rectifiable and computable curve $C$ that contains a point $p$ which does not belong to any computable arc.
\end{theorem}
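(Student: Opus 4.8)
The plan is to build $C$ as a computable rectifiable curve whose length is a computable real, containing a $\Delta^0_2$ point $p$ that is diagonalized against all computable arcs $A_e$. The curve $C$ will be a ``broom'' or ``comb'' with a single base path to which we attach infinitely many progressively shorter and shorter ``hairs,'' with the total length controlled by a convergent series (say the hair attached at stage $s$ has length at most $2^{-s}$) so that the length of $C$ is $\sum$ of a computable, summable sequence and hence a computable real, and $C$ is the image of a computable map of $[0,1]$ (traversing the base, detouring out and back along each hair in turn; backtracking is what makes this possible). The point $p$ will be the limit of the feet (or tips) of these hairs along the base path, so $p \in C$ since $C$ is closed. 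The construction is a finite-injury priority argument: requirement $R_e$ asserts $p \notin A_e$, i.e., either $k_e < \aleph_0$ (the $e$th sequence is not infinite, handled trivially) or $p \notin \bigcap_{i<k_e} V_{\mathfrak{p}^e_i}$.

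The key steps, in order: (1) Fix a computable layout of the base path and a schedule of ``slots'' along it where hairs may be attached, with geometric length bounds ensuring computable rectifiability regardless of the outcome. (2) Run the requirements $R_0, R_1, \ldots$ with the usual priority ordering. Requirement $R_e$ watches the enumeration $\{\mathfrak{p}^e_i\}$; since each $V_{\mathfrak{p}^e_i}$ is a finite union of rational rectangles and $\diam(\mathfrak{p}^e_i) < 2^{-i}$, once the current approximation $p_s$ to $p$ is pinned down well enough, $R_e$ can look for a stage $i$ and wait to see whether $p_s \in V_{\mathfrak{p}^e_i}$. If $A_e$ ever threatens to contain $p$ — that is, the current approximation stays inside $V_{\mathfrak{p}^e_i}$ for all $i$ seen so far — then $R_e$ \emph{acts} by moving the tail of the hair-feet (hence the final location of $p$) out of the current $V_{\mathfrak{p}^e_i}$: concretely, abandon the current attachment region and start attaching all future hairs in a fresh region of the base disjoint from the closure of some $V_{\mathfrak{p}^e_i}$, which is possible because $V_{\mathfrak{p}^e_i}$ is a finite union of bounded rectangles while the base path has room left. (3) Use Proposition~\ref{prop:LIMIT.POINTS} together with connectedness bookkeeping to confirm that after each action the object being built remains (the closure of) a connected, arcwise-traversable set of the prescribed total length, and that the limit point $p$ exists because only finitely many requirements of higher priority ever act, so the hair-feet are eventually confined to a shrinking nested sequence of regions. (4) Verify that the map $[0,1]\to C$ assembled from the stage-by-stage detours is computable and that $\operatorname{length}(C)$ is computable (the partial sums converge at a computable rate, uniformly in the construction, since each action only redirects future hairs and never lengthens past hairs).

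The main obstacle I expect is step (2)/(3): ensuring that an action of $R_e$ genuinely and \emph{permanently} removes $p$ from $A_e$ without injuring the finiteness/length budget or the connectedness of $C$. The subtlety is that $A_e$ is given only via the names $\mathfrak{p}^e_i$, so $R_e$ must commit to a region avoiding $V_{\mathfrak{p}^e_i}$ for some concrete $i$ \emph{before} knowing $k_e$; if later $k_e$ turns out finite this is harmless, but if $R_e$ guessed a bad $i$ it may have to act again, and we must bound the number of such actions (finite injury) by using the fact that successive $\mathfrak{p}^e_i$ refine one another with $\diam \to 0$, so the $V_{\mathfrak{p}^e_i}$ shrink and $R_e$ needs to be ``fooled'' only finitely often before it can lock $p$ outside $A_e$. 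A secondary obstacle is the geometry: each relocation of the hair-attachment region must be to a spot of positive distance from a closed set that is a finite union of rectangles, while keeping the base path and all hairs inside a fixed compact region and keeping the traversal computable; this is where the effective local connectivity / space-filling curve machinery (Theorem~\ref{thm:PEANO} and \cite{Daniel.McNicholl.2009}, \cite{Couch.Daniel.McNicholl.2010}) is invoked to guarantee that the finished set admits a computable parametrization.
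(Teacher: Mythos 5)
There is a genuine gap, and it occurs exactly at the place you flag as the main obstacle. Your stated reason why $R_e$ can always act --- ``a fresh region of the base disjoint from the closure of some $V_{\mathfrak{p}^e_i}$ \ldots is possible because $V_{\mathfrak{p}^e_i}$ is a finite union of bounded rectangles while the base path has room left'' --- does not hold. The sets $V_{\mathfrak{p}^e_i}$ shrink only down to $A_e$, and an arc can be fat (it can even have positive area, as with Osgood arcs), so for every $i$ the open set $V_{\mathfrak{p}^e_i}$ may cover the \emph{entire} region in which the lower-priority part of your construction is still permitted to work; in particular, if $A_e$ contains the whole remaining portion of your base path, no rectangle of the kind you need ever appears, $R_e$ never gets to act, and $p$ lands in $A_e$. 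So the finite-injury bookkeeping (``$R_e$ is fooled only finitely often because the diameters of the links go to $0$'') does not by itself yield satisfaction of $R_e$: you need a topological reason why the opponent arc \emph{cannot} swallow the whole current structure near where $p$ is confined. The paper supplies exactly this: at every stage the configuration inside the region reserved for $R_e$ contains a cross or a `T', i.e.\ a point whose removal leaves at least three components, and no subset of an arc has that property; hence some point of the current structure lies off the closed set $A_e$, and the nesting $\overline{V_{\mathfrak{p}^e_{i+1}}}\subseteq V_{\mathfrak{p}^e_i}$ then produces a concrete rational rectangle $R$ with $R\cap\overline{V_{\mathfrak{p}^e_i}}=\emptyset$ in which the construction can continue. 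Without an argument of this kind (branch points placed near every possible future location of $p$), the diagonalization simply may never fire.

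A second, related defect: you fix the base path in advance and let $p$ be the limit of the hair-feet along it. Since the base is closed, $p$ then lies on the base; if the base is itself a computable arc (a segment, say), you have placed $p$ on a computable arc and defeated the theorem outright, no matter how cleverly the hairs are relocated. In the paper the location of $p$ is \emph{not} tied to any pre-chosen arc: when $R_e$ acts, the entire branching comb is rebuilt at a smaller scale inside the adaptively chosen rectangle $R$, the regions $S_{t+1,e'}$ for $e'>e$ move into $R$, and $p$ is the limit of branch points of these nested, opponent-dependent structures. Your length and parametrization steps (geometric length bounds, and computability of $C$ via computable compactness plus effective local connectivity and Theorem~\ref{thm:PEANO}) are in the same spirit as the paper's Lemmas on compactness, local connectivity and rectifiability, but the two issues above are the heart of the proof and are not addressed by the proposal as written.
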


\begin{proof}
We construct $C$ and $p$ by stages.  Let $X[t]$ denote the value of $X$ at stage $t$.
For each $e \in \N$, let $R_e$ be the requirement
\[
k_e = \aleph_0\ \Rightarrow\ p \not \in A_e.
\]

\noindent\bf Stage $0$:\rm\ Let $C_0$ and $S_{0, e}$ be as in Figure \ref{fig:STAGE.0}.
\begin{figure}
\resizebox{3.5in}{3.5in}{\includegraphics{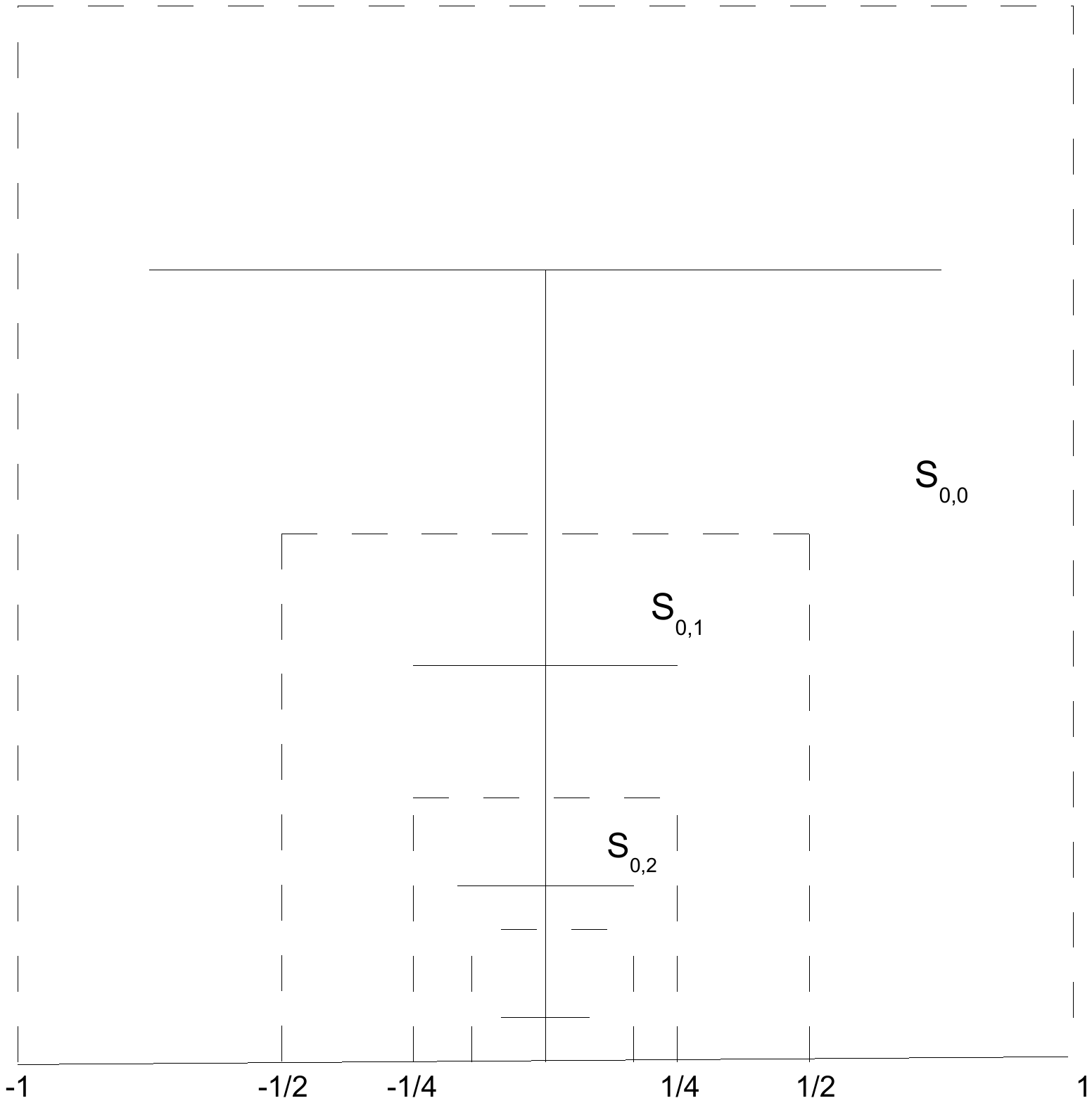}}
\caption{}\label{fig:STAGE.0}
\end{figure}
More formally, let:
\begin{eqnarray*}
S_{0,e} & = & [(-2^{-e}, -2^{-(e+1)}) \cup (2^{-(e+1)}, 2^{-e})] \times (2^{-e}, 2^{-(e-1)}).
\end{eqnarray*}
\begin{eqnarray*}
C_0 & = & (\{0\} \times [0, \frac{3}{2}]) \cup \bigcup_{j=1}^\infty ([-2^{-j}, 2^{-j}] \times \{ \frac{3}{2^j})\\
p_0 & = & (0,0)\\
\end{eqnarray*}
Let $h_{e,0}, h_{e,1}, \ldots$ denote in order of decreasing length the horizontal line segments in Figure \ref{fig:STAGE.0} that lie above the $x$-axis.  Let $q_{e,0}$ denote the midpoint of $h_{e,0}$.   
Let $v_0$ denote the line segment from $p_0$ to $q_{0,0}$.  \\

\noindent\bf Stage $t+1$:\rm\ If there do not exist $e,i$ and a rational rectangle $R$ such that 
\begin{itemize}
	\item there is no $j$ such that $\mathfrak{p}^e_j[t]\downarrow$ and $S_{t, e+1} \cap V_{\mathfrak{p}^e_j} = \emptyset$, 
 	\item $\mathfrak{p}_i^e[t]\downarrow$, 
 	\item $\overline{R} \subseteq S_{t,e}$, 
	\item $R \cap C_t$ is a line segment, 
	\item $R \cap \overline{V_{\mathfrak{p}_i^e}} = \emptyset$, and 
	\item $\diam(R) < 2^{-t}$, 
\end{itemize}
then let $S_{t+1,e} = S_{t,e}$, $p_{t+1} = p_t$, and $C_{t+1} = C_t$.   Otherwise, choose the least such $e,i, R$.  Let $l_t = R \cap C_t$.  For $e' \leq e$, let $S_{t+1, e'} = S_{t, e'}$.  Assume without loss of generality that $l_t$ is horizontal.  Define $D_{t+1}$ and $S_{t+1, e'}$ for $e' > e$ as in Figure \ref{fig:STAGE.t+1}.
\begin{figure}
\resizebox{3.5in}{3.5in}{\includegraphics{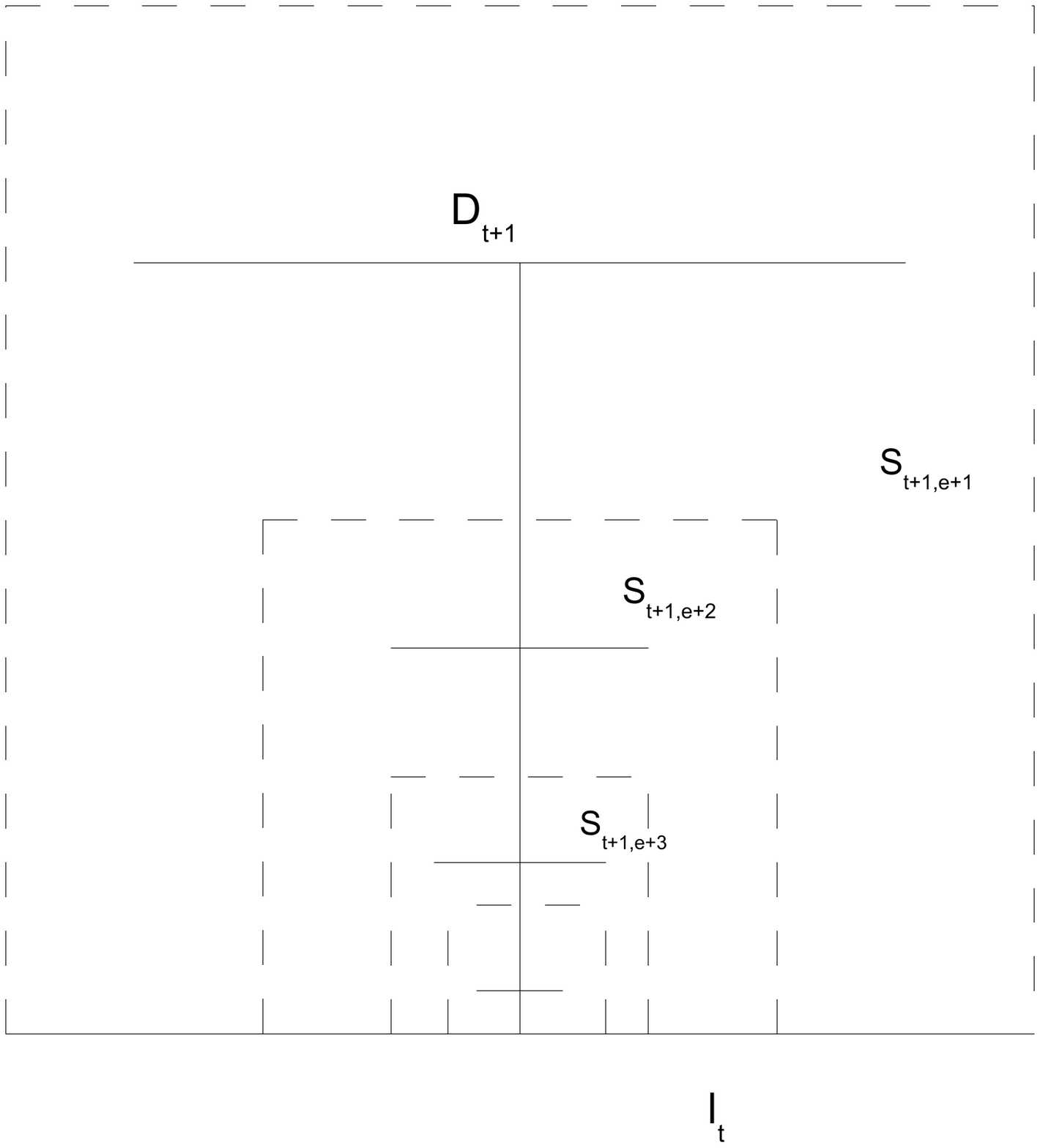}}
\caption{}\label{fig:STAGE.t+1}
\end{figure}
$S_{t+1, e+1}$ is chosen so that its closure is contained in $R$.  
Let $h_{e+1, t+1}, h_{e+2, t_1}, \ldots$ denote in order of decreasing length the line horizontal line segments in Figure \ref{fig:STAGE.t+1} that lie above $l_t$.  $D_{t+1}$ is constructed so that the length of $h_{e', t+1}$ is smaller than 
$2^{-t}2^{-(e' - e) -1}$ whenever $e' > e$.  Let $v_{t+1}$ denote the vertical line segment in Figure \ref{fig:STAGE.t+1}, and let $p_{t+1}$ denote the point it has in common with $l_t$.  
We let $C_{t+1} = C_t \cup D_{t+1}$.
We say that $R_e$ \emph{acts at stage $t+1$}.

This completes the description of the construction.  We divide its verification into a sequence of lemmas.  For all $e$, let $S_e =_{df} \lim_{t \rightarrow \infty} S_{t,e}$.

\begin{lemma}\label{lm:SAT}
Let $e \in \N$.  Then:
\begin{enumerate}
	\item $R_e$ acts at most finitely often.
	
	\item $S_e$ exists.
	
	\item If $k_e = \aleph_0$, then $S_{e+1}$ exists and contains no point of 
	$A_e$.
\end{enumerate}
\end{lemma}

\begin{proof}
By way of induction, suppose each $R_e'$ with $e' < e$ acts at most finitely often. 
Let $s_0$ be the least stage such that no $R_{e'}$ with $e' < e$ acts at any stage $s \geq s_0$.

Let us say that $S_{e'}$ has \emph{settled at stage $t$} if $S_{t,e'} = S_{t',e'}$ for all $t' \geq t$.
It follows that $s_0$ is the least stage at which $S_e$ has settled.  If $s_0 \geq 1$, then some $R_{e'}$ with $e' < e$ acts at $s_0 - 1$.   It then follows from the construction that $S_e \cap C_{s_0}$ is either a cross or a `T' as in Figure \ref{fig:POSSIBILITIES}.
\begin{figure}
\resizebox{3.5in}{3.5in}{\includegraphics{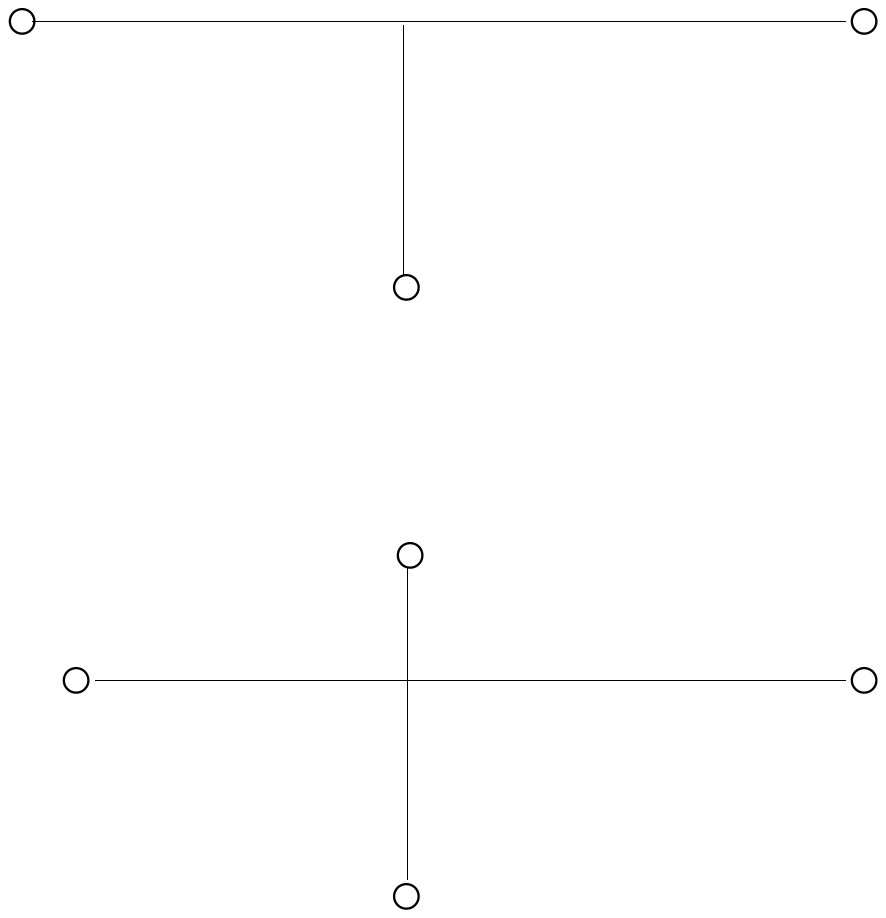}}
\caption{}\label{fig:POSSIBILITIES}
\end{figure}
We now show that there is at most one stage $t+1 \geq s_0$ at which $R_e$ acts.  For, suppose $t+1$ is the least number such that $t+1 \geq s_0$ and $R_e$ acts at $t+1$.  By construction, in particular by the action performed at $t+1$,  there is a number $i$ such that $\mathfrak{p}_i^e[t]\downarrow$ and $\overline{S_{t+1, e+1}} \cap V_{\mathfrak{p}_i^e} = \emptyset$.  It now follows by induction and the conditions which must be met in order for $R_e$ to act that $S_{t', e+1} = S_{t+1, e+1}$ for all $t' \geq t+1$ and that $R_e$ does not act at any $t' \geq t+1$.  It now also follows that $S_{e+1}$ exists. 

Suppose $k_e = \aleph_0$.  We show that $S_{e+1} \cap A_e = \emptyset$.  We first consider the case where there exists $t+1 \geq s_0$ such that $S_{t, e+1} \cap V_{\mathfrak{p}_j^e} = \emptyset$ for some $j$ such that $\mathfrak{p}_j^e[t]\downarrow$.  Hence, $R_e$ does not act at $t+1$.  It then follows by induction that $R_e$ does not act at any $t' \geq t+1$.  Hence, $S_{e+1} = S_{t+1, e+1}$.  Thus, $S_{e+1} \cap A_e = \emptyset$.

So, suppose that for every $t+1 \geq s_0$, $S_{t, e+1} \cap V_{\mathfrak{p}_j^e} \neq \emptyset$ for every $j$ such that $\mathfrak{p}_j^e[t]\downarrow$.  As noted, $C_{s_0}$ is either a cross or a `T'.  In either case, $C_{s_0}$ contains a unique point $q$ with the property that $C_{s_0} - \{q\}$ has at least three connected components.  No connected subset of an arc can have this property.  Hence, $C_{s_0} \not \subseteq A_e$.  Since, $\R^2 - A_e$ is open, there is a point $q_1 \in C_{s_0} - \{q\}$ that does not belong to $A_e$.  Since $q_1 \in S_e$ which is open, it follows that $R_e$ acts at some $t+1 \geq s_0$.  It then follows by what has just been shown that $S_{e+1}$ settles at $t+1$ and contains no point of $A_e$.
\end{proof}

Let 
\[
C = \overline{ \bigcup_s C_s}.
\]

Let $d$ denote the Euclidean metric on $\R^2$.  Let $D_r(q)$ denote the open disk of radius $r$ and center $q$.  When $X, Y \subseteq \R^2$, let 
$d(X,Y) = \inf_{q_1 \in X, q_2 \in Y} d(q_1, q_2)$.  When $q \in \R^2$ and $X \subseteq \R^2$, let 
$d(q, X) = d(\{q\}, X)$.  

\begin{lemma}\label{lm:SAT2}
$p =_{df} \lim_{t \rightarrow \infty} p_t$ exists, is $\Delta^0_2$, and does not belong to any computable arc though it does belong to $C$.
\end{lemma}

\begin{proof}
Suppose $R_{e_0}$ acts at $t_0 + 1$ and also that no $R_{e'}$ with $e' \leq e_0$ acts at any $t+1$ with $t > t_0$.  It follows from the construction that $d(p_{t+1}, p_{t_0 + 1}) < 2^{-t_0}$.  
There are infinitely many such $t_0$.  Hence, $\{p_t\}_t$ is a Cauchy sequence.
It also follows from the Limit Lemma that $p$ is $\Delta^0_2$.  Since $p_t \in C_t$ for all $t$, it follows that 
$p \in C$.  

It also follows that $S_{e_0 + 1}$ settles at $t_0 + 1$.  It then follows from Lemma \ref{lm:SAT} that $p$ does not belong to any computable arc.
\end{proof}

\begin{lemma}\label{lm:COMP.COMPACT}
$C$ is computably compact.
\end{lemma}

\begin{proof}
By construction, if $q \in C - C_0$, then $q \in S_{e,0}$ for some $e$.  Hence, 
$d(q, C_0) < 2$.  Thus, $C$ is bounded.  Hence it is compact.

Let $\mathcal{K}^*$ be the set of all non-empty compact subsets of $\R^2$, and let 
$d_H$ be the Hausdorff metric on $\mathcal{K}^*$.  That is, 
\[
d_H(A, B) = \max\{\max_{q \in a} d(q, B), \max_{q \in B} d(q, A)\}.
\]
It follows that a non-empty compact set $X \subseteq \R^2$ is computably compact if and only if it is possible to uniformly compute from a number $k \in \N$ a non-empty finite $E \subseteq \Q^2$ such that 
$d_H(E, X) < 2^{-k}$.  See, \emph{e.g.}, Section 5.2 of \cite{Weihrauch.2000}.  At the same time, by construction, 
\[
d_H(C_t, C_{t+1}) < 2^{-t}.
\]
So, when $t_1 > t$, 
\[
d_H(C_t, C_{t_1}) < 2^{-t + 1}.
\]
We now calculate an upper bound on $d_H(C, C_t)$.  By construction, if $q \in C$, then 
$d(q, C_t) < 2^{-t}$.  
So, $\max_{q \in C}, d(q, C_t) \leq 2^{-t}$.  Since $C_t \subseteq C$, $\max_{q \in C_t} d(q, C) = 0$. 
Hence, $d_H(C, C_t) \leq 2^{-t}$.

Each $C_t$ is computably compact uniformly in $t$.  That is, from $t,k \in \N$ it is possible to uniformly compute a finite $E_{t,k} \subseteq \Q^2$ such that $d_H(E_{t,k}, C_t) < 2^{-k}$.  So, 
\begin{eqnarray*}
d_H(E_{t+1, t+1}, C) & \leq & d_H(E_{t+1, t+1}, C_{t+1}) + d_H(C_{t+1}, C)\\
&< & 2^{-(t+1)} + 2^{-(t+1)} = 2^{-t}
\end{eqnarray*}
So, $C$ is computably compact.
\end{proof}

\begin{lemma}\label{lm:ELC}
$C$ is effectively locally connected.
\end{lemma}

\begin{proof}
Suppose we are given as input a name of an open $U \subseteq \R^2$ and a name of a point $q \in U \cap C$.  We show that from these data it is possible to uniformly compute a name of an open set $V \subseteq U$ such that $q \in V$ and $V \cap C$ is connected. 

We first introduce some notation.  Namely, if $R_e$ acts at stage $t+1$, then let 
\[
B_{e, t+1} = \bigcup_{e' >e} S_{e', t+1}.
\]
If $R_e$ does not act at $t+1$, then let $B_{e, t+1} = \emptyset$.  Also, let $B_{e, 0} = \emptyset$.  By construction, $B_{e,t+1} \cap C$ is connected.  

From the given data, it is possible to compute a positive rational number $\epsilon$ such that $D_\epsilon(q) \subseteq U$.

We now read the name of $q$ and cycle through all natural numbers until we find $e, t_0 \in \N$ and a rational rectangle $F_0$ such that $q \in F_0$, $2^{-t_0 + 1} + \diam(F_0) < \epsilon$, and one of the following holds.
\begin{enumerate}
	\item $F_0 \cap C_{t_0} \subseteq h_{e,t_0} - v_{t_0}$, $\overline{F_0} \subseteq S_{t_0,e}$, and 
	$d(\overline{F_0}, v_{t_0}) > 2^{-t_0}$.\label{itm:A}
	
	\item $F_0 \cap C_{t_0} \subseteq v_{t_0} - h_{e,t_0}$, $\overline{F_0} \subseteq S_{t_0,e}$, and 
	$d(\overline{F_0}, h_{e,t_0}) > 2^{-t_0}$.\label{itm:B}
	
	\item $q_{e,t_0} \in F_0$ and $\overline{F_0} \subseteq S_{t_0,e}$.\label{itm:C}
	
	\item $F_0 \cap S_{t_0,e} \cap S_{t_0, e+1} \cap C_{t_0} \neq \emptyset$, 
	$F_0 \cap C_{t_0} \subseteq v_{t_0}$, 
	$\overline{F_0} \subseteq S_{t_0,e} \cup S_{t_0,e+1}$, and 
	$d(\overline{F_0}, h_{e,t_0})$, $d(\overline{F_0}, h_{e+1, t_0}) > 2^{-t_0}$.\label{itm:D}
	
	\item $\overline{F_0} \subseteq S_{e,t_0}$ and $p_{t_1 + 1} \in F_0$ for some $t_1 \leq t_0$.\label{itm:E}
\end{enumerate}
Since $q \in C$, this process must terminate.

If one of (\ref{itm:A}) - (\ref{itm:D}) holds, let 
\[
V = \bigcup\{ B_{e_1, t_1 + 1}\ :\ e_1 \in \N\ \wedge\ t_1 \geq t_0\ \wedge\ B_{e_1, t_1 + 1} \cap F_0 \neq \emptyset\}\ \cup F_0.
\]
If (\ref{itm:E}), then let 
\[
V = \bigcup\{S_{t_0,e'}\ :\ e' > e\ \wedge \diam(S_{t_0,e'}) < 2^{-t_0}\}\ \cup\ F_0.
\]
Suppose one of (\ref{itm:A}) - (\ref{itm:D}) holds.  Then, $V$ is open and a name of $V$ can be uniformly computed from the given data.  It also follows that 
\[
V \cap (\bigcup_s C_s) = \bigcup\{ B_{e_1, t_1 + 1} \cap C_{t_1 +1}\ :\ e_1 \in \N\ \wedge\ t_1 \geq t_0\ \wedge\ B_{e_1, t_1 + 1} \cap F_0 \neq \emptyset\}\ \cup\ (F_0 \cap C_{t_0}).
\]
In each case, $F_0 \cap C_{t_0}$ is connected.  Suppose $e_1 \in \N$, $t_1 \geq t_0$, and 
$B_{e_1, t_1 + 1} \cap F_0 \neq \emptyset$.  By inspection of cases, $B_{e_1, t_1 + 1} \cap C_{t_0} \neq \emptyset$.  By Theorem 1-14 of \cite{Hocking.Young.1988}, $V \cap \bigcup_s C_s$ is connected.  It then follows from Proposition \ref{prop:LIMIT.POINTS} that $V \cap C$ is connected.  

Finally, it follows that the diameter of $V$ is at most $2^{-t_0 + 1} + \diam(F_0)$.  Hence, 
since $q \in V$, $V \subseteq U$.  

Suppose (\ref{itm:E}).  Again, $V$ is open and it is possible to uniformly compute a name of $V$ from the given data.
Also, $q \in V$.  The diameter of $V$ is at most $2^{-t_0}$.  Hence, $V \subseteq U$.  By construction, 
\[
C \cap \bigcup\{S_{t_0, e'}\ :\ e' > e\ \wedge\ \diam(S_{t_0, e'}) < 2^{-t_0}\}
\]
is connected.  Since (\ref{itm:E}), $R_e$ acts at $t_1 + 1$.  Hence, $F_0 \cap C_{t_1}$ is connected.  
By Proposition \ref{prop:LIMIT.POINTS}, 
\[
\{p_{t_1 +1 }\} \cup C \cap \bigcup\{S_{t_0, e'}\ :\ e' > e\ \wedge\ \diam(S_{t_0,e'}) < 2^{-t_0}\}
\]
is connected.  But, $p_{t_1 + 1} \in R_0 \cap C_{t_1}$. Hence, $V \cap C$ is connected.
\end{proof}

Thus, by Theorem \ref{thm:PEANO}, $C$ is a computable curve.  However, Lemma \ref{lm:ELC} is not merely a roundabout way of reaching this conclusion, but is in fact a stronger conclusion.  See  \cite{Couch.Daniel.McNicholl.2010}.

\begin{lemma}\label{lm:COMP.RECT}
$C$ is rectifiable and its length is computable.
\end{lemma}

\begin{proof}
By construction, for each $t$ for which $C_{t+1} - C_t \neq \emptyset$, $\overline{C_{t+1} - C_t}$ is a curve.  Suppose $R_e$ acts at $t + 1$.  It follows that the length of $h_{e', t+1}$ is smaller than $2^{-t}2^{-(e' - e) - 1}$ whenever $e' > e$ and that the length of $\overline{C_{t+1} - C_t}$ is at most $2^{-t+2}$.  It follows that the length of $C$ is finite and that for each $t$, the length of $C$ differs from the length of $C_t$ by at most $2^{-t + 3}$.  Thus, the length of $C$ is computable.
\end{proof}

The proof is complete.
\end{proof}

\section*{Acknowledgement}

I thank Jack Lutz and the Computer Science Department of Iowa State University for their hospitality.  I also thank Lamar University for generously supporting my visit to Iowa State University during the Fall of 2010.  I also thank Jack for stimulating conversation and for one of the proofs of Theorem \ref{thm:NOT.ON.RECT}.  Finally, I thank my wife Susan for support.

\bibliographystyle{amsplain}
\bibliography{/Users/tmcnicho/myfolders/research/bibliographies/computability,/Users/tmcnicho/myfolders/research/bibliographies/analysis}

\def\cprime{$'$}
\providecommand{\bysame}{\leavevmode\hbox to3em{\hrulefill}\thinspace}
\providecommand{\MR}{\relax\ifhmode\unskip\space\fi MR }
\providecommand{\MRhref}[2]{%
  \href{http://www.ams.org/mathscinet-getitem?mr=#1}{#2}
}
\providecommand{\href}[2]{#2}
\begin{thebibliography}{10}

\bibitem{Boolos.Burgess.2002}
George~S. Boolos, John~P. Burgess, and Richard~C. Jeffrey, \emph{Computability
  and logic}, fourth ed., Cambridge University Press, Cambridge, 2002.

\bibitem{Brattka.2008}
Vasco Brattka, \emph{Plottable real number functions and the computable graph
  theorem}, SIAM J. Comput. \textbf{38} (2008), no.~1, 303--328.

\bibitem{Cooper.2004}
S.~Barry Cooper, \emph{Computability theory}, Chapman \& Hall/CRC, Boca Raton,
  FL, 2004.

\bibitem{Couch.Daniel.McNicholl.2010}
P.J. Couch, B.D. Daniel, and T.H. McNicholl, \emph{Computing space-filling
  curves}, To appear in Theory of Computing Systems.

\bibitem{Daniel.McNicholl.2009}
D.~Daniel and T.H. McNicholl, \emph{Effective local connectivity properties},
  Submitted.

\bibitem{Downey.Hirschfeldt.2010}
Rodney~G. Downey and Denis~R. Hirschfeldt, \emph{Algorithmic randomness and
  complexity}, Theory and Applications of Computability, Springer, New York,
  2010.

\bibitem{Falconer.2003}
Kenneth Falconer, \emph{Fractal geometry}, second ed., John Wiley \& Sons Inc.,
  Hoboken, NJ, 2003, Mathematical foundations and applications.

\bibitem{Garey.Johnson.1979}
Michael~R. Garey and David~S. Johnson, \emph{Computers and intractability}, W.
  H. Freeman and Co., San Francisco, Calif., 1979, A guide to the theory of
  NP-completeness, A Series of Books in the Mathematical Sciences.

\bibitem{Gu.Lutz.Mayordomo.2006}
X.~Gu, J.~Lutz, and E.~Mayordomo, \emph{Points on computable curves},
  Proceedings of the Forty-Seventh Annual IEEE Symposium on Foundations of
  Computer Science (Berkeley, CA), IEEE Computer Society Press, October 2006,
  pp.~469--474.

\bibitem{Gu.Lutz.Mayordomo.2009}
X.~Gu, J.H. Lutz, and E.~Mayordomo, \emph{Curves that must be retraced}, To
  appear.

\bibitem{Hocking.Young.1988}
John~G. Hocking and Gail~S. Young, \emph{Topology}, second ed., Dover
  Publications Inc., New York, 1988.

\bibitem{Jones.1990}
Peter~W. Jones, \emph{Rectifiable sets and the traveling salesman problem},
  Invent. Math. \textbf{102} (1990), no.~1, 1--15.

\bibitem{Ko.1995}
Ker-I Ko, \emph{A polynomial-time computable curve whose interior has a
  nonrecursive measure}, Theoret. Comput. Sci. \textbf{145} (1995), no.~1-2,
  241--270.

\bibitem{Lutz.2003}
Jack~H. Lutz, \emph{The dimensions of individual strings and sequences},
  Inform. and Comput. \textbf{187} (2003), no.~1, 49--79.

\bibitem{Lutz.2005}
\bysame, \emph{Effective fractal dimensions}, MLQ Math. Log. Q. \textbf{51}
  (2005), no.~1, 62--72.

\bibitem{Miller.2004}
J.~Miller, \emph{Degrees of unsolvability of continuous functions}, The Journal
  of Symbolic Logic \textbf{69} (2004), 555--584.

\bibitem{Moore.Kline.1919}
R.L. Moore and J.R. Kline, \emph{On the most general plane closed point-set
  through which it is possible to pass a simple continuous arc}, Ann. of Math.
  \textbf{20} (1919), no.~3, 218--223.

\bibitem{Okikiolu.1992}
Kate Okikiolu, \emph{Characterization of subsets of rectifiable curves in
  {${\bf R}^n$}}, J. London Math. Soc. (2) \textbf{46} (1992), no.~2, 336--348.

\bibitem{Sagan.1994}
Hans Sagan, \emph{Space-filling curves}, Universitext, Springer-Verlag, New
  York, 1994.

\bibitem{Soare.1987}
R.I. Soare, \emph{Recursively enumerable sets and degrees}, Springer-Verlag,
  Berling, Heidelberg, 1987.

\bibitem{Weihrauch.2000}
Klaus Weihrauch, \emph{Computable analysis}, Texts in Theoretical Computer
  Science. An EATCS Series, Springer-Verlag, Berlin, 2000.

\end{thebibliography}

\end{document}